\definecolor{ascolor}{rgb}{0,0.5,0}
\DeclareMathAccent{\maxvec}{\mathord}{letters}{"7E}
\DeclareMathOperator*{\argmin}{arg\,min}
\newtheorem{assumption}{Assumption}
\newtheorem{thm}{Theorem}[section]
\newtheorem{prop}[thm]{Proposition}
\newtheorem{rem}[thm]{Remark}
\numberwithin{equation}{section}
\title{An optimization-based Atomistic-to-Continuum coupling method\thanks{
		 Sandia National Laboratories is a multi-program laboratory
                managed and operated by Sandia Corporation, a wholly owned subsidiary of
                Lockheed Martin Corporation, for the U.S. Department of
                Energy's National Nuclear Security Administration under
                contract DE-AC04-94AL85000.}
}
\author{Derek Olson\thanks{University of Minnesota (\{olso4056,luskin,ashapeev\}@umn.edu).
DO was supported by the Department of Defense (DoD) through the National Defense Science \& Engineering
Graduate Fellowship (NDSEG) Program.  ML was supported in part by the NSF PIRE Grant OISE-0967140,
DOE Award DE-SC0002085, and AFOSR Award
FA9550-12-1-0187. AS was supported in part by the
DOE Award DE-SC0002085.}
\and
Pavel B. Bochev\thanks{Sandia National Laboratories,
Numerical Analysis and Applications, P.O. Box 5800, MS 1320,\break
Albuquerque, NM 87185-1320 (pbboche@sandia.gov).}
\and
Mitchell Luskin\footnotemark[2]
\and
Alexander V. Shapeev\footnotemark[2]
}
\date{\today}
\begin{document}
\maketitle
\begin{abstract}
We present a new optimization-based method for
atomistic-to-continuum (AtC) coupling. The main idea is to cast
the coupling of the atomistic and continuum models as a
constrained optimization problem with virtual Dirichlet
controls on the interfaces between the atomistic and continuum
subdomains. The optimization objective is to minimize the error
between the atomistic and continuum solutions on the overlap
between the two subdomains, while the atomistic and continuum
force balance equations provide the constraints.
Splitting of the atomistic and continuum problems instead
of blending them and their subsequent use as constraints in the
optimization problem distinguishes our approach from the
existing AtC formulations.
We present and analyze the method in the context of a
one-dimensional chain of atoms modeled using a linearized
two-body next-nearest neighbor interactions.
\end{abstract}

%%%%%%%%%%%%%%%%%%%%%%%%%%%%%%%%%%%%%%%%%%%%%%%%%%%%%%%%%%%%%%%%%%%%%%
\section{Introduction}
%%%%%%%%%%%%%%%%%%%%%%%%%%%%%%%%%%%%%%%%%%%%%%%%%%%%%%%%%%%%%%%%%%%%%%

Atomistic-to-continuum (AtC) coupling methods aim to combine the efficiency of continuum models such as PDEs with the accuracy of the atomistic models necessary to resolve local features such as cracks or dislocations that can affect the global material behavior.
Specifically, suppose that an atomistic model gives an accurate description of the true material behavior in a domain $\Omega$, but that this model is prohibitively expensive to solve on the whole domain.
The core of AtC formulations is to keep this model only where the
fully atomistic description is required to accurately represent
local features, while utilizing a more efficient continuum
model in the rest of $\Omega$.  Existing AtC methods differ chiefly
by the manner in which these models are joined together, which is
also the main challenge in the AtC formulation.

To explain the main ideas we can consider a scenario where
$\Omega$ is subdivided into an atomistic and  a continuum
subdomain, $\Omega_a$ and $\Omega_c$, such that $\Omega_a \cup
\Omega_c = \Omega$ and $\Omega_a \cap \Omega_c =: \Omega_o \neq
\emptyset$.  The overlap domain $\Omega_o$ is often referred to
as the handshake or blending region. In an AtC method, we
use the atomistic description on $\Omega_a$ and the continuum
description on $\Omega_c$. The problem then is how to couple
the two different descriptions of the material over the
overlap region, $\Omega_o$.

Attempts at this problem thus far can be characterized as
either energy-based, where a coupled energy is defined to be
minimized; or force-based, where internal and external forces
in $\Omega_a$ and $\Omega_c$ are equilibrated. In either
case, the resulting AtC methods often involve some form
of blending of the energy and/or the forces over the overlap
region.

The extension of the Arlequin method \cite{Bauman_08_CM}
and quasicontinuum method \cite{koten_2011,bqce12} are examples
of blended energy AtC methods in which the continuum and
atomistic energies are combined over $\Omega_o$ using a
partition of unity. The blended functional is then minimized
over $\Omega$ subject to a constraint expressing equality (in a
suitable sense) of the atomistic and continuum displacements in
$\Omega_o$. %Blended energy-based quasicontinuum methods have
%been proposed and studied in \cite{koten_2011,bqce12}.

A standard way to define an AtC method using force
blending is to start from the variational form of the atomistic
and continuum models and blend the corresponding weak forms
over $\Omega_o$. We refer to \cite{Bochev_08_MMS} and
\cite{bqcf} for investigations of blended force-based AtC
formulations. A more extensive investigation of these and
other AtC methods has been presented in \cite{acta.atc}.

In this paper, we formulate and analyze an
optimization-based AtC method which differs significantly from
the existing AtC approaches. The main idea is to cast AtC as a
constrained optimization problem with virtual Dirichlet
controls on the interfaces between the atomistic and continuum
subdomains. The objective in this optimization problem is to
minimize a suitable norm of the difference between the
atomistic and continuum displacement fields over the overlap
region $\Omega_o$, while the atomistic and continuum force
balance equations on $\Omega_a$ and $\Omega_c$ provide the
constraints.
Because we consider splitting the original problem over
$\Omega$ into two subproblems over $\Omega_a$ and $\Omega_c$,
we also must impose some form of boundary condition on the two
interfaces between $\Omega_a$ and $\Omega_c$ in order to have a
well-posed problem. In the context of the optimization
formulation, these boundary conditions act as Dirichlet
controls. However, they are virtual, or artificial controls,
because the boundaries on which they are imposed are artificial
rather than actual domain boundaries.

While the  AtC methods in
\cite{Bauman_08_CM,Chamoin_10_IJNME,bqcf,koten_2011,bqce12}
also involve a constrained optimization formulation, they
differ fundamentally from the approach developed in this paper.
Most notably, the former \emph{minimize a blended energy
functional} subject to constraints \emph{forcing the equality}
of the atomistic and continuum displacements over $\Omega_o$.
In contrast, our approach completely separates the two models
and minimizes the discrepancy of the atomistic and continuum
displacements in $\Omega_o$ subject to the two models acting independently in
$\Omega_a$ and $\Omega_c$.
The reversal of the roles of the constraints and objectives in
our approach, relative to blending methods, bears some
important theoretical and computational advantages. It
addresses the problem of blending two
physical models over a shared spatial region by
minimizing instead the
mismatch of the deformations in the overlap region which is
less restrictive on the overall formulation.

The use of optimization and control ideas for AtC further gives
rise to a number of attractive theoretical and computational properties.
For instance, we are able to infer
key properties of the method from its atomistic and
continuum constituencies, as illustrated in the proof of
Lemma~\ref{Q}.  This should be contrasted to the
force-based quasicontinuum operator which fails to be stable in specific
norms even though both atomistic and continuum force
operators are stable~\cite{DobsonLuskinOrtner2010b}.

The primary computational advantage of the proposed method
is that code to implement the method can be built upon preexisting code
for solving individual atomistic and continuum problems.
Since the core feature is minimizing the difference between solutions of
an atomistic model and a continuum model, all that is required is a linking
program between the two algorithms which carries out the optimization.

Our AtC work follows a number of previous efforts exploring the
use of optimization and control ideas for the design of numerical methods
\cite{Lions_00_JAM,Lions_00_CRASP,Bochev_09_SINUM,Bochev_09a_LSSC,Bochev_11a_LSSC}.
Conceptually, our approach is closest to the
virtual control techniques for heterogeneous domain decomposition developed in~\cite{gervasio_2001}.  In
that setting, both domains are modeled using a local, continuum (PDE)
model, whereas here we are concerned solely with coupling a
nonlocal atomistic model with a local continuum model.

Since the main goal of this paper is to demonstrate the
application of optimization and control ideas to AtC, we
formulate and analyze our method using a linearized Lennard-Jones
type atomistic
model~\cite{curtin_2003,parks_2008,doblusort:qce.stab}. For
completeness, we review this model in Section \ref{sec:prelim}.
We present the new method in Section \ref{S3} and analyze its
error in Section \ref{sec:error}. Section \ref{sec:concl}
summarizes our conclusions.

%%%%%%%%%%%%%%%%%%%%%%%%%%%%%%%%%%%%%%%%%%%%%%%%%%%%%%%%%%%%%%%%%%%%%%
\section{Preliminaries}\label{sec:prelim}
%%%%%%%%%%%%%%%%%%%%%%%%%%%%%%%%%%%%%%%%%%%%%%%%%%%%%%%%%%%%%%%%%%%%%%
This section establishes the notation and defines the
model atomistic problem. Application of the Cauchy-Born
rule~\cite{doblusort:qce.stab} to this problem yields the
continuum formulation.

\subsection{The atomistic model}
 We consider a chain of $N+1$ atoms with reference
(undeformed) positions $X_i$, $i=0,\ldots,N$. The atomic
positions in the deformed configuration are  $x_i$,
$i=0,\ldots,N$, and $u_i=x_i-X_i$ is the displacement of atom
$i$. We assume each atom interacts with its first and
second neighbors through a linearized Lennard-Jones type potential.  Thus, we can effectively think of
atoms interacting with first and
second neighbors via linear springs with spring constants
$k_1 > 0$ and $k_2$, respectively, and equilibrium lengths  $\ell$
and $2\ell$, respectively. %~\cite{curtin_2003}.
For the linearization of typical
interatomic potentials such as Lennard-Jones, the second neighbor spring constant satisfies
$k_2 < 0$ \cite{doblusort:qce.stab} and so is not a physical spring, but we will assume $k_2 < 0$ in the following. We also assume nearest
neighbor interactions dominate second neighbor interactions with the hypothesis
\begin{equation}\label{nCondition}
k_1 + 4k_2 > 0,
\end{equation}
which is also necessary and sufficient for the stability of the atomistic problem~\cite{DobsonLuskinOrtner2010b} we consider below. For simplicity, we set the lattice parameter $\ell = 1$.

Under these assumptions, the computational domain is $\Omega
:= [0,N] \cap \mathbb{Z}$, and the  lattice displacements
$u=\{u_0,\ldots,u_{N}\}$ are elements of the space
\[
\mathcal{U} := \left\{u: \Omega \to \mathbb{R}\right\}
\]
with inner product
$(\cdot,\cdot)_{\ell^2(\Omega)}$ and norm
$\|\cdot\|_{\ell^2(\Omega)} =
(\cdot,\cdot)_{\ell^2(\Omega)}^{1/2}.$
The left and right ``boundaries'' of $\Omega$ are  $\Gamma^{-}=\{0,1\}$ and $\Gamma^{+}=\{N-1,N\}$, respectively\footnote{The reason for fixing two boundary atoms is to ensure all unconstrained
atoms have a full set of neighbors to interact with and avoid
boundary defects},
and  $\Omega^\circ = [2,N-2] \cap \mathbb{Z}$ is the interior.
The size of a domain is $|\cdot|$, for example, $|\Omega|=(N+1)$ and $|\Omega^\circ|=N-3$.

The potential energy of the lattice is the sum of first and second neighbor interactions
\begin{equation}\label{intEnergy}
\mathcal{E}^a(u) := \sum_{i = 0}^{N-1}\frac{k_1}{2}(u_{i+1} - u_{i})^2
+\sum_{i = 1}^{N-1}\frac{k_2}{2}(u_{i+1} - u_{i-1})^2.
\end{equation}
We impose homogeneous Dirichlet boundary conditions by fixing
the atoms in $\Gamma^{-}$ and $\Gamma^{+}$. The corresponding
homogeneous space of admissible displacements is then
\[
\mathcal{U}_0 := \left\{u \in \mathcal{U} : \mbox{$u=0$ on $\Gamma^{-}\cup\Gamma^{+}$}\right\}.
\]

We assume that a dead load external force, $f \in \mathcal{U}_0$, is applied at each atom site resulting in a total energy of
\begin{equation}\label{totalEnergy}
\mathcal{E}^{tot}_a(u) = \mathcal{E}^a(u) - \left( f, u\right)_{\ell^2(\Omega)}.
\end{equation}
An equilibrium configuration of the lattice under the dead load is then given by
\begin{equation}\label{latMin}
\tilde{u}^a = \argmin_{u \in \mathcal{U}_0} \mathcal{E}^{tot}_a(u).
\end{equation}
The Euler-Lagrange equations
\begin{equation}\label{eq:EL}
\frac{\partial \mathcal{E}^{tot}_a(\tilde{u}^a)}{\partial u_i} = 0,\qquad i\in\Omega^\circ,
%3\le i \le N-2
\end{equation}
for  (\ref{latMin})  give the force balance constraints at
each internal atom. We express these constraints using the
finite difference  operators $\Delta_1,\Delta_2:
\mathcal{U}_0\rightarrow \mathcal{U}_0$ defined by
\begin{align*}
\left(\Delta_1u\right)_{i}       =~& u_{i-1} - 2u_i + u_{i+1}, \quad i\in\Omega^\circ,
%i = 3,\, \ldots,\, N-2,
\\
\left(\Delta_2u\right)_{i}       =~& u_{i-2} - 2u_i +u_{i+2}, \quad   i\in\Omega^\circ.
%i = 3,\, \ldots,\, N-2.
\end{align*}
From (\ref{eq:EL}), the internal force at site $i$ for $i\in\Omega^\circ$
%$3\lei \le N-2,$
equals $(k_1\Delta_1u  + k_2\Delta_2u)_i$.  Thus,
the necessary conditions for the equilibrium of the atomistic
system are
\begin{align}
-(k_1\Delta_1 \tilde{u}^a  + k_2\Delta_2 \tilde{u}^a)_i =~& f_i,  \quad i\in\Omega^\circ,
%i = 3, \ldots, N-2,
\label{testLabel0} \\
\tilde{u}^a_i =~& 0, \,  \quad i\in\Gamma^{-}\cup\Gamma^{+}  \label{testLabel01}.
\end{align}
The system of linear algebraic equations
(\ref{testLabel0})--(\ref{testLabel01}) represents the fully
atomistic problem, which we write compactly as:
\begin{equation}\label{eq:AP}
\mbox{find $\tilde{u}^a \in \mathcal{U}_0$ such that $A\tilde{u}^a=f$}\,,
\end{equation}
where $A := -k_1\Delta_1 - k_2\Delta_2$.

\subsection{The continuum model}
To derive the continuum (local) model, we use the
Cauchy-Born rule
$u_i\approx 1/2(u_{i+1}+u_{i-1})$; see~\cite{acta.atc},
to approximate the second
neighbor interactions by first neighbor interactions
\begin{equation}\label{cb}
(u_{i+1} - u_{i-1})^2\approx 2 (u_{i+1} - u_{i})^2
+2 (u_{i} - u_{i-1})^2.
\end{equation}
Substitution of the Cauchy-Born approximation \eqref{cb} into the atomistic
energy \eqref{intEnergy} yields the continuum potential energy
\begin{equation}\label{scb}
\mathcal{E}^c =
\frac{1}{2}\sum_{i=0}^{N-1}k_c(u_{i+1}-u_{i})^2
-k_2(u_1-u_0)^2-k_2(u_N-u_{N-1})^2  \,,
\end{equation}
where $k_c=k_1+4k_2.$ We note that a surface Cauchy-Born
correction is not needed for~\eqref{scb} since we are assuming
that $u_i=0$ for $i\in\Gamma^{-}\cup\Gamma^{-}$.

We now define the total continuum energy under a force $f\in
\mathcal{U}_0$ as
\begin{equation}\label{totalEnergyCont}
\mathcal{E}^{tot}_c(u) = \mathcal{E}^c(u) - \left( f, u\right)_{\ell^2(\Omega)}.
\end{equation}
An equilibrium configuration of the continuum model minimizes the total energy:
\begin{equation}\label{contMin}
\tilde u^c = \argmin_{u \in \mathcal{U}_0} \mathcal{E}^{tot}_c(u).
\end{equation}
The Euler-Lagrange equations for (\ref{contMin}) are given by
\begin{align}
-\left(k_c\Delta_1 \tilde u^c\right)_i =~& f_i,  \quad i\in\Omega^\circ,
%i = 3, \ldots, N-2,
\label{contLabel0} \\
\tilde u_i^c =~& 0, \,  \quad i\in\Gamma^{-}\cup\Gamma^{+}
%i = 1,\,2,\,N-1,\,N
\label{contLabel01}.
\end{align}
The system of linear algebraic equations
(\ref{contLabel0})--(\ref{contLabel01}) represents the
continuum problem. Setting $C =- k_c\Delta_1,$ this system
assumes the form:
\begin{equation}\label{eq:CP}
\mbox{find $\tilde u^c\in \mathcal{U}_0$ such that $C\tilde u^c=f$}\,.
\end{equation}

\begin{subsection}{The continuum modeling error}
The error in the approximation of the atomistic solution by the
 continuum solution is given by the following proposition.
\begin{prop}\label{cont.error}
There exists a fixed constant $c_0,$ independent of $N$ and
$\tilde{u}^a,$ such that
\begin{equation}\label{CBounda}
\| \tilde{u}^a - \tilde u^c\| _{\ell^2(\Omega)}
\leq
c_0N^2\| \Delta_1^2\tilde{u}^a\| _{\ell^2(\Omega)}.
\end{equation}
\end{prop}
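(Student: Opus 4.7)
The plan is to relate the atomistic operator $A$ to the continuum operator $C$ via a purely algebraic identity, recast the modeling error $\tilde u^a - \tilde u^c$ as the solution of a discrete Poisson problem with an explicit right-hand side, and finish with a standard $O(N^2)$ stability estimate for the discrete Laplacian.

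The first step is the pointwise identity
\begin{equation*}
(\Delta_2 u)_i = (\Delta_1^2 u)_i + 4(\Delta_1 u)_i, \qquad i \in \Omega^\circ,
\end{equation*}
which a direct five-point expansion establishes for any $u$ whose values are defined on $\{i-2,\dots,i+2\}$. Combined with $k_c = k_1+4k_2$, this rewrites $A = -k_1\Delta_1 - k_2\Delta_2$ as $A = C - k_2\,\Delta_1^2$ on $\Omega^\circ$. Applying this to $\tilde u^a$ and subtracting $C\tilde u^c = f$ then gives
\begin{equation*}
C(\tilde u^a - \tilde u^c) = k_2\,\Delta_1^2 \tilde u^a \quad \text{on } \Omega^\circ,
\end{equation*}
with $\tilde u^a - \tilde u^c \in \mathcal{U}_0$, since both $\tilde u^a$ and $\tilde u^c$ vanish on $\Gamma^-\cup\Gamma^+$.

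The remaining step is to invert $C = -k_c\Delta_1$ on $\mathcal{U}_0$. Under the stability hypothesis (\ref{nCondition}), $C$ is symmetric positive definite, and the smallest eigenvalue of $-\Delta_1$ on $\mathcal{U}_0$ is of order $\pi^2/N^2$; equivalently, a discrete Poincar\'e inequality applied twice yields $\|w\|_{\ell^2(\Omega)} \le c\,N^2\,\|Cw\|_{\ell^2(\Omega)}$ for every $w \in \mathcal{U}_0$. Plugging in the Poisson relation above produces
\begin{equation*}
\|\tilde u^a - \tilde u^c\|_{\ell^2(\Omega)} \le c\,|k_2|\,N^2\,\|\Delta_1^2\tilde u^a\|_{\ell^2(\Omega)},
\end{equation*}
which is the claim with $c_0 = c|k_2|$.

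The only genuinely nontrivial ingredient is the $O(N^{-2})$ lower bound on the smallest eigenvalue of $-\Delta_1$ on $\mathcal{U}_0$; this is classical for the one-dimensional discrete Laplacian and is the source of the $N^2$ factor. Everything else---the five-point algebraic identity, the subtraction producing the Poisson equation, and the handling of the homogeneous boundary conditions---is routine manipulation.
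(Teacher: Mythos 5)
Your proof is correct and takes essentially the same route as the paper's: both rest on the algebraic identity $\Delta_2 - 4\Delta_1 = \Delta_1^2$, which gives $A - C = -k_2\Delta_1^2$ and hence $C(\tilde u^a - \tilde u^c) = k_2\Delta_1^2\tilde u^a$, followed by inverting $C$ using the $O(N^{-2})$ lower bound on the smallest eigenvalue of the discrete Dirichlet Laplacian. The paper simply makes that eigenvalue explicit, $\lambda_1 = 4k_c\sin^2\bigl(\tfrac{\pi}{2(n+1)}\bigr)$ with $n = N-3$, where you invoke it as a classical fact.
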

\begin{proof}
To estimate the continuum modeling error, we observe that
$$
A - C = -\left(k_1\Delta_1 + k_2\Delta_2\right) + (k_1 + 4k_2)\Delta_1
      = -k_2(\Delta_2 - 4\Delta_1)
      = -k_2\Delta_1^2.
$$
%\begin{align*}
%A - C =~& -\left(k_1\Delta_1 + k_2\Delta_2\right) + (k_1 + 4k_2)\Delta_1 \\
%      =~& -k_2(\Delta_2 - 4\Delta_1) \\
%      =~& -k_2\Delta_1^2.
%\end{align*}
Now $C$ is just the $1D$ discrete Laplacian on $\Omega$ with
homogeneous Dirichlet boundary conditions at atom sites $1$ and $N-1.$ So,
the minimum eigenvalue for $C$ is $\lambda_1 =
4k_c\sin^2\left(\frac{\pi}{2(n+1)}\right)$ where $n = N-3$ is
the number of unconstrained atoms.
Using that  $\tilde{u}^a-\tilde u^c=0$ at
atoms $1$ and $N-1$ implies $C\tilde u^c = f = A\tilde{u}^a$, which yields the following bound:
\begin{equation*}
\begin{split}
\| \tilde{u}^a - \tilde u^c\| _{\ell^2(\Omega)}
\leq~& \| C^{-1}\| _{\ell^2(\Omega)}\cdot\| C\left(\tilde{u}^a - \tilde u^c\right)\| _{\ell^2(\Omega)}   \\
=~& \| C^{-1}\| _{\ell^2(\Omega)}\cdot\| C\tilde{u}^a - A\tilde{u}^a\| _{\ell^2(\Omega)} \\
=~& |k_2|\, \| C^{-1}\| _{\ell^2(\Omega)}\cdot\| \Delta_1^2\tilde{u}^a\| _{\ell^2(\Omega)} \\
=~& \frac{|k_2|}{4k_c\sin^2\left(\frac{\pi}{2(n+1)}\right)}\| \Delta_1^2\tilde{u}^a\| _{\ell^2(\Omega)}
%\\
\leq~
%&
c_0N^2\| \Delta_1^2\tilde{u}^a\| _{\ell^2(\Omega)}.
\end{split}
\end{equation*}
\end{proof}
\end{subsection}

%%%%%%%%%%%%%%%%%%%%%%%%%%%%%%%%%%%%%%%%%%%%%%%%%%%%%%%%%%%%%%%%%%%%%%
\section{Optimization-based AtC formulation}\label{S3}
%%%%%%%%%%%%%%%%%%%%%%%%%%%%%%%%%%%%%%%%%%%%%%%%%%%%%%%%%%%%%%%%%%%%%%
As with any AtC formulation, we begin by splitting
$\Omega$ into atomistic, continuum, and overlap regions
\[
\Omega_a = [0, L] \cap \mathbb{Z}, \quad \Omega_c = [K, N] \cap \mathbb{Z}, \quad \Omega_o = \Omega_a \cap \Omega_c = [K,L] \cap \mathbb{Z},
\]
where $0<K<L<N$. The strict interiors of these domains are
\[
\Omega_a^\circ = [2, L-2] \cap \mathbb{Z}, \quad \Omega_c^\circ = [K+2, N-2]
\cap \mathbb{Z}, \quad \Omega_o^\circ = \Omega_a^\circ \cap \Omega_c^\circ = [K+2,L-2] \cap \mathbb{Z},
\]
and their boundaries are
\begin{gather*}
\Gamma^{-}_a = \{0,1\}\quad\text{and}\quad\Gamma^{+}_a =
\{L-1,L\},\\
 \Gamma^{-}_c = \{K,K+1\}\quad\text{and}\quad\Gamma^{+}_c =
\{N-1,N\},\\
\Gamma^{-}_o = \{K,K+1\}\quad\text{and}\quad\Gamma^{+}_o =
\{L-1,L\}.
\end{gather*}
Here and in the remainder of the paper, we
find it convenient to use modified Vinogradov notation where
the implied constant is independent of the parameters $K,L,$
and $N$.  Thus, $X \gtrsim~ Y$ means there is a positive constant $c$
such that $X \geq c Y$ with $c$ independent of $K,L,$ and $N$.

Recall that the main objective of an AtC method is a
stable, accurate, and efficient approximation of the lattice
displacements by using the atomistic model on $\Omega_a$, employing the
continuum approximation on $\Omega_c$, and accurately
merging them together on $\Omega_o$. It follows that the
efficiency of AtC methods hinges on the assumption that the
atomistic region is small compared to the continuum region. On
the other hand, it is intuitively clear that a stable and
accurate AtC method requires some conditions on the size of
$\Omega_o$. These assumptions are pivotal to our analysis and
we formalize them below.
\begin{assumption}\label{assumpA}
There exists a real number $p > 1$ such that
$$
L\lesssim N^{1/p}.
$$
\end{assumption}
\begin{assumption}\label{assumpB}
There exists a real number $\gamma$, $\frac{3}{L} < \gamma < 1,$ such that
$$
\frac{L-K}{L} = \gamma.
$$
\end{assumption}
In other words, we assume that  $|\Omega_a | \lesssim
|\Omega|^{1/p}$ and  $|\Omega_o| = \gamma|\Omega_a|$ is
such that  $|\Omega_o| > 3$, i.e., the overlap region's size is at least twice
the size of the interaction range; see Fig.~\ref{fig:atoms}.  The assumption that $\gamma$ is constant means the ratio of the overlap width to the size of the atomistic region is constant, or equivalently, that the ratio of $K$ to $L$ is constant.
\begin{figure}[htp!]
\centering
\includegraphics{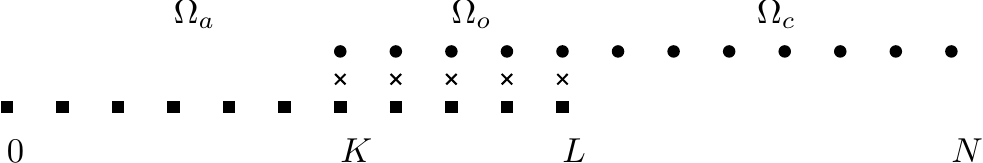}
\caption{Decomposition of $\Omega$.  Squares are in the atomistic region $\Omega_a$, circles are in the continuum region $\Omega_c$, and crosses are in the overlap region $\Omega_o$.}
\label{fig:atoms}
\end{figure}

We formulate the optimization-based AtC method in two
steps. The first step defines independent atomistic and
continuum subproblems on  $\Omega_a$ and $\Omega_c$,
respectively, whereas the second step merges these problems by
minimizing the mismatch of their solutions on $\Omega_o$. To
describe the first step, we introduce the spaces
\begin{align*}
\mathcal{U}_a :=~& \left\{u: \Omega_a \to \mathbb{R} \ | \ \mbox{$u=0$ on $\Gamma_a^{-}$}
%u_1 = u_2 = 0
\right\},    \\
\mathcal{U}_{a,0} :=~& \left\{u: \Omega_a \to \mathbb{R} \ | \ \mbox{$u=0$ on $\Gamma_a^{-}\cup\Gamma_a^{+}$}
%u_1 = u_2 = u_{L-1} = u_{L} = 0
\right\},    \\
\mathcal{U}_c :=~& \left\{u: \Omega_c \to \mathbb{R} \ | \ \mbox{$u=0$ on $\Gamma_c^{+}$}
%u_{N-1} = u_N = 0
\right\}, \\
\mathcal{U}_{c,0} :=~& \left\{u: \Omega_c \to \mathbb{R} \ | \ \mbox{$u=0$ on $\Gamma_c^{-}\cup\Gamma_c^{+}$}
%u_{K} = u_{K+1} = u_{N-1} = u_N = 0
\right\},
\end{align*}
for the subdomain displacements and the ``trace'' spaces
\begin{equation}\label{eq:trace}
\Lambda_a = \left\{w:\Gamma_a^+ \to \mathbb{R}\right\}\quad\mbox{and}\quad
\Lambda_c = \left\{w:\Gamma_c^{-} \to \mathbb{R}\right\}
\end{equation}
for the displacement values on the artificial domain boundaries $\Gamma^{+}_a$ and $\Gamma^{-}_c$.
We denote the standard $\ell^2$ inner product
and norm on these spaces by $(\cdot,\cdot)_{\ell^2(\sigma)}$
and $\|\cdot\|_{\ell^2(\sigma)}$, where $\sigma$ stands for the
appropriate domain under consideration.
The trace spaces provide the boundary conditions on
$\Gamma^{+}_a$ and $\Gamma^{-}_c$ necessary to formulate
well-posed atomistic and continuum problems on $\Omega_a$ and
$\Omega_c$.

Let $A_a$ and $f^a$ be the restrictions of $A$ and $f$ to the interior $\Omega_a^\circ$. Likewise, let
$C_c$ and $f^c$ denote the restrictions of $C$ and $f$ to the interior $\Omega_c^\circ$.  The local nature of the continuum subdomain operator $C_c$ necessitates the need for only a single boundary constraint on $\Gamma^{-}_c$ whereas the atomistic subdomain operator $A_a$ requires two constraints on $\Gamma^{+}_a$.  For this reason, when referring to the continuum model, we adjust the definitions of $\Omega_c$, $\Omega_c^\circ$, $\Gamma^{-}_c$, and $\Gamma^{+}_c$ to be
\[
\Omega_c = [K, N-1] \cap \mathbb{Z},\quad
\Omega_c^\circ = [K+1, N-2] \cap \mathbb{Z}, \quad \Gamma^{-}_c = \{K\}, \quad \Gamma^{+}_c = \{N-1\},
\]
with analogous changes made to overlap boundaries and interiors
and the displacement and trace spaces.  Thus, the left,
artificial boundary and the right, true boundary of the continuum region are single atoms.  The continuum
displacement at atom $N$ is zero since this is a true boundary condition of the original problem.  To simplify notation in upcoming computations, we set $\bar{N} := N-1$; see~\eqref{fig:boundary}.
\begin{figure}[htp!]
\centering
\includegraphics{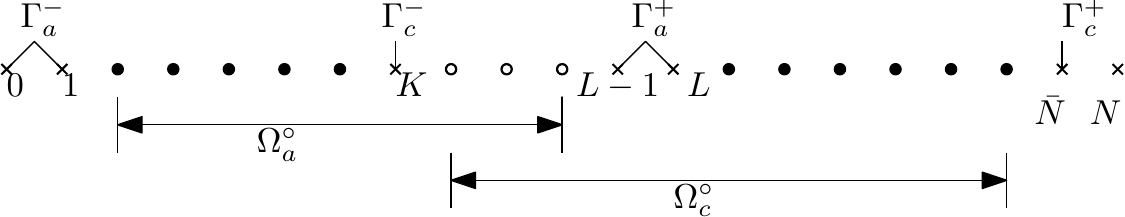}
\caption{Trace spaces and interiors of $\Omega_a,\, \Omega_c$.  The interior of $\Omega_o$ is depicted with open circles.}
\label{fig:boundary}
\end{figure}

We define the atomistic subproblem as a restriction of
(\ref{eq:AP}) to $\Omega_a$ with inhomogeneous boundary
conditions at the artificial atomistic boundary $\Gamma^{+}_a$,
i.e., given $\theta^a\in\Lambda^a$ we seek $u^a \in
\mathcal{U}_a$ such that
\begin{equation}\label{eq:subatom}
\left\{
\begin{array}{rl}
A_au^a = f^a    & \mbox{on $\Omega_a^\circ$} \\[1ex]
u^a       = \theta^a   &  \mbox{on $\Gamma_a^{+}$}
\end{array}
\right. .
\end{equation}
Similarly, the continuum subproblem is a restriction of (\ref{eq:CP}) to $\Omega_c$ with inhomogeneous boundary condition at the artificial continuum boundary $\Gamma^{-}_c$: given $\theta^c \in \Lambda_c$ we seek
$u^c \in \mathcal{U}_c$ such that
\begin{equation}\label{eq:subcont}
\left\{
\begin{array}{rl}
C_cu^c = f^c    & \mbox{on $\Omega_c^\circ$} \\[1ex]
u^c       = \theta^c   &  \mbox{on $\Gamma_c^{-}$}
\end{array}
\right. .
\end{equation}
Thanks to the boundary conditions prescribed on the artificial
boundaries, the subdomain problems
(\ref{eq:subatom})--(\ref{eq:subcont}) are well-posed and can
be solved for any given $\theta^a$ and $\theta^c$.  However,
because $\theta^a$ and $\theta^c$ are unknown, the solutions to
(\ref{eq:subatom}) and (\ref{eq:subcont}) cannot yet be
determined.

The second step in the formulation of our AtC method is
the merging of (\ref{eq:subatom}) and (\ref{eq:subcont})
into a single well-posed problem for the unknown states $u^a$ and
$u^c,$ and the unknown boundary conditions $\theta^a$ and
$\theta^c$.  Intuitively, we desire that
\begin{equation}\label{eq:equality}
\theta^c\approx u^a\ \mbox{on $\Gamma^{-}_c$},\quad
\theta^a\approx u^c\ \mbox{on $\Gamma^{+}_a$}, \quad\mbox{and}\quad
u^a\approx u^c\ \mbox{in $\Omega^\circ_o$}.
\end{equation}
In many hybrid AtC methods these, or similar conditions, are
used to constrain the hybrid force balance equations or the
minimization of a hybrid energy functional; see e.g.,
\cite{Bauman_08_CM,Chamoin_10_IJNME,bqcf}.
However, there is no canonical way of enforcing strong or weak type equality of fundamentally different atomistic and continuum solution states.

The cornerstone of our optimization-based AtC approach is
to view (\ref{eq:equality}) as the optimization objective
rather than as the constraint. Specifically, in the context of
our model problem, the quantity
\begin{equation}\label{eq:mismatch}
\|u^a-u^c\|^2_{\ell^2(\Omega_o)}=~ \|u^a -
\theta^c\|^2_{\ell^2(\Lambda_c)}+ \|u^a -
u^c\|^2_{\ell^2(\Omega^\circ_o)}+ \|u^c -
\theta^a\|^2_{\ell^2(\Lambda_a)}
\end{equation}
provides a notion of an artificial ``mismatch'' energy between
the solutions of (\ref{eq:subatom}) and (\ref{eq:subcont})  in
the overlap region. Instead of forcing this energy to be
exactly zero, which does not yield a problem with a
solution, we seek to minimize it subject to the atomistic and
continuum force balance equations (\ref{eq:subatom}) and
(\ref{eq:subcont}) holding independently in $\Omega_a$ and
$\Omega_c$. Succinctly, our new AtC formulation is the
following constrained optimization problem:
\begin{equation}\label{opt1}
\min_{\{u^a,u^c,\theta^a,\theta^c\}} \frac{1}{2}\| u^a - u^c\| _{\ell_2(\Omega_o)}^2
\ \mbox{s.t.}\
\left\{\!\!\!
\begin{array}{rl}
A_au^a = f^a    & \!\! \mbox{on $\Omega_a^\circ$} \\[1ex]
u^a       = \theta^a   & \!\! \mbox{on $\Gamma_a^{+}$}
\end{array},
\right.
\begin{array}{rl}
C_cu^c = f^c    &\!\! \mbox{on $\Omega_c^\circ$} \\[1ex]
u^c       = \theta^c   &\!\!  \mbox{on $\Gamma_c^{-}$}
\end{array}.
\end{equation}
In the language of constrained optimization, the
functions $u^a\in\mathcal{U}_a$ and $u^c\in\mathcal{U}_c$ are
the \emph{states}, and the artificial boundary conditions $\theta^a\in\Lambda_a$ and
$\theta^c\in\Lambda_c$  are the \emph{controls}. The purpose of
the controls is to allow the states to adjust so as to provide
the smallest possible value of the objective while still
satisfying the constraints. In the context of (\ref{opt1}),
$\theta^a$ and $\theta^c$ are \emph{virtual boundary} controls,
as the boundaries $\Gamma^+_a$ and $\Gamma^{-}_c$ are an
artifact of the domain decomposition into atomistic and
continuum parts.

We shall show below that the optimization
problem~\eqref{opt1} is well-posed. But before investigating
this, we show the optimization-based AtC
formulation~\eqref{opt1} satisfies a patch test criterion.

\subsection{Patch Test Consistency}
The bane of all atomistic-to-continuum coupling mechanisms is
the existence of nonphysical ghost forces arising on the
interface of the continuum and atomistic
regions~\cite{Shenoy:1999a,dobsonluskin09}.  The patch test is
a well-known test for determining the existence of ghost forces
by checking whether a uniform strain is an equilibrium solution
to the proposed method on a perfect lattice under zero external
forces~\cite{Shenoy:1999a,acta.atc}.  As with force-based
methods, the optimization formulation~\eqref{opt1} is patch
test consistent by design. Indeed, if we replace the
homogeneous Dirichlet boundary conditions by the inhomogeneous
boundary conditions
\[
u^a_{0} = 0, \ u^a_1 = F \ \mbox{ and } \ u^c_{N-1} = (N-1)F, \ u^c_{N} = NF,
\]
where $F > 0$ defines a macroscopic displacement gradient, and if we take take $f^a \equiv 0, f^c \equiv 0$, then it is
straightforward to verify that~\eqref{opt1} has a minimum of
$0$ achieved when $u^{a}_{i} = iF$ and $u^c_{i} = iF$.  This is
due to the fact that both atomistic and continuum operators are
patch test consistent individually.

%%%%%%%%%%%%%%%%%%%%%%%%%%%%%%%%%%%%%%%%%%%%%%%%%%
\subsection{Well-Posedness}
%%%%%%%%%%%%%%%%%%%%%%%%%%%%%%%%%%%%%%%%%%%%%%%%%%
To establish that the optimization-based AtC
formulation~\eqref{opt1} is well-posed, we switch to the
reduced space form of the optimization problem, which requires
the elimination of the states from~\eqref{opt1}. In our case,
this task is trivial because for any pair of virtual controls
$\{\theta^a,\theta^c\}\in\Lambda_a\times\Lambda_c$ the
constraints
\begin{equation}\label{eq:subprob}
\left\{
\begin{array}{rl}
              A_au^a  = f^a         & \mbox{on $\Omega_a^\circ$},\\[1.5ex]
                    u^a  = \theta^a & \mbox{on $\Gamma^{+}_a,$}
\end{array}
\right.
\quad\mbox{and}\quad
\left\{
\begin{array}{rl}
              C_cu^c      = f^c                  & \mbox{on $\Omega_c^\circ$},\\[1.25ex]
                     u^c = \theta^c     &\mbox{on $\Gamma^{-}_c,$}
\end{array}
\right.
\end{equation}
have unique solutions $u^a = u^a\left(\theta^a\right)\in \mathcal{U}_a$ and  $u^c  = u^c\left(\theta^c\right) \in \mathcal{U}_c$.
Using these solutions in~\eqref{opt1} transforms the latter into an equivalent unconstrained minimization problem
\begin{equation}\label{reduced}
\min\limits_{\left\{\theta^a, \, \theta^c\right\} \in \Lambda_a \times \Lambda_c}
\frac{1}{2}\| u^a(\theta^a) - u^c(\theta^c)\| ^2_{\ell^2(\Omega_o)} \,.
\end{equation}
This problem, in terms of the virtual controls only, is the reduced space form of ~\eqref{opt1}.
We analyze (\ref{reduced}) following the strategy in Gervasio et al.~\cite{gervasio_2001}.  Specifically,
for any given $\left\{\theta^a, \, \theta^c\right\} \in \Lambda_a \times \Lambda_c$
we split the solutions of the constraint equations (\ref{eq:subprob}) as
\begin{equation}\label{decomp}
u^{a}(\theta^{a}) = v^{a}(\theta^{a}) + u^{a,0}
\quad\mbox{and}\quad
u^{c}(\theta^{c}) = v^{c}(\theta^{c}) + u^{c,0}
\end{equation}
where the homogeneous components $u^{a,0}\in\mathcal{U}_{a,0}$ and $u^{c,0}\in\mathcal{U}_{c,0}$ solve
\begin{equation}\label{splitting1}
A_au^{a,0} =~ f^a\quad\mbox{and}\quad C_cu^{c,0} = f^c,
\end{equation}
respectively, whereas $v^{a}(\theta^{a})\in\mathcal{U}_a$ and $v^{c}(\theta^{c})\in\mathcal{U}_c$ solve
\begin{equation}\label{prob1}
\left\{
\begin{array}{rl}
              A_av^a  = 0         & \mbox{on $\Omega_a^\circ$}\\[1.25ex]
                    v^a  = \theta^a & \mbox{on $\Gamma^{+}_a$}
\end{array}
\right.
\quad\mbox{and}\quad
\left\{
\begin{array}{rl}
              C_cv^c      = 0             & \mbox{on $\Omega_c^\circ$}\\[1.25ex]
                     v^c = \theta^c     &\mbox{on $\Gamma^{-}_c$}
\end{array}
\right.,
\end{equation}
respectively.  We will prove the following stability result for
\eqref{prob1} in Appendix~\ref{A}.
\begin{lemma}\label{stab}
%There exists a constant $c$, independent of $K,L,$ and $N$, such that
For any $\left\{\theta^a, \, \theta^c\right\} \in \Lambda_a \times \Lambda_c$, the solutions
$v^a(\theta^a)$ and $v^c(\theta^c)$ to \eqref{prob1} satisfy the bounds
\begin{equation}\label{stabbound}
\begin{split}
\| v^a(\theta^a)\| ^2_{\ell^2(\Omega_a)}&\lesssim~L   \|\theta^a\|^2_{\ell^2(\Gamma^+_a)} ,\\
%cL\left( (\theta^a_L)^2+(\theta^a_{L-1})^2 \right),\\
\| v^c(\theta^c)\| ^2_{\ell^2(\Omega_c)}&\le (N-K)  \|\theta^c\|^2_{\ell^2(\Gamma^-_c)}
%(\theta^c_K)^2.
\end{split}
\end{equation}
\end{lemma}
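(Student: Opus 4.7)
The plan is to solve each homogeneous BVP in~\eqref{prob1} explicitly by the method of characteristic roots and then bound the resulting $\ell^2$-norms by direct summation. The continuum piece is essentially a one-line computation; the atomistic piece requires locating the roots of the associated fourth-order symbol and tracking each root's contribution.

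For $v^c$, the equation $-k_c \Delta_1 v^c = 0$ with Dirichlet data $v^c_K = \theta^c$ and $v^c_{N-1} = 0$ (the latter inherited from $\mathcal{U}_c$) forces $v^c$ to be discretely harmonic, hence linear: $v^c_i = \theta^c\,(N-1-i)/(N-1-K)$ on $\Omega_c$. Summing squares,
\[
\|v^c\|^2_{\ell^2(\Omega_c)} = (\theta^c)^2 \sum_{j=0}^{N-1-K}\Bigl(\tfrac{j}{N-1-K}\Bigr)^2 \le (N-K)(\theta^c)^2,
\]
which is exactly the second estimate of the lemma.

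For $v^a$, I would diagonalize $A_a$. Substituting $v_i = r^i$ into $A_a v = 0$ produces the characteristic polynomial $(r-1)^2\bigl[k_2 r^2 + (k_1+2k_2)r + k_2\bigr] = 0$. By~\eqref{nCondition} and $k_2<0$, the quadratic factor has discriminant $k_1(k_1+4k_2) > 0$, and Vieta gives $r_+ r_- = 1$ together with $r_+ + r_- = -(k_1+2k_2)/k_2 > 2$; hence $r_+ > 1 > r_- = 1/r_+ > 0$ are real constants that depend only on $k_1,k_2$. The general solution is $v^a_i = a_1 + a_2 i + a_3 r_+^i + a_4 r_-^i$, and the boundary conditions $v^a_0 = v^a_1 = 0$ eliminate $a_1,a_2$ and give
\[
v^a_i = a_3\,\phi_+(i) + a_4\,\phi_-(i), \qquad \phi_\pm(i) := r_\pm^i - 1 - i(r_\pm - 1),
\]
where $\phi_\pm(0) = \phi_\pm(1) = 0$ by construction. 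The two remaining conditions at $\Gamma^+_a$ form a $2\times 2$ system for $(a_3,a_4)$ with determinant $D := \phi_+(L-1)\phi_-(L) - \phi_+(L)\phi_-(L-1)$. Granting the bound $|D| \gtrsim L\,r_+^{L-1}$ addressed below, Cramer's rule gives $|a_3| \lesssim \|\theta^a\|/r_+^{L-1}$ and $|a_4| \lesssim \|\theta^a\|/L$. Using $|\phi_+(i)| \lesssim r_+^i$ and the elementary inequality $|\phi_-(i)| \le i(1-r_-)$, the estimate then follows by splitting the two contributions:
\[
\sum_{i=0}^L |a_3\phi_+(i)|^2 \lesssim \|\theta^a\|^2 \sum_{i=0}^L r_+{}^{2(i-L+1)} \lesssim \|\theta^a\|^2
\]
(a geometric series with ratio $r_+^{-2} < 1$), and
\[
\sum_{i=0}^L |a_4\phi_-(i)|^2 \lesssim \frac{\|\theta^a\|^2}{L^2}\sum_{i=0}^L i^2 \lesssim L\,\|\theta^a\|^2.
\]
Adding these yields $\|v^a\|^2_{\ell^2(\Omega_a)} \lesssim L\,\|\theta^a\|^2_{\ell^2(\Gamma^+_a)}$.

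The main obstacle is establishing $|D| \gtrsim L\,r_+^{L-1}$, since $D$ is superficially the difference of two quantities each of order $L\,r_+^L$. Expanding $\phi_+(i) = r_+^i - 1 - i(r_+-1)$ and $\phi_-(i) = i(1-r_-) - (1 - r_-^i)$ and multiplying out, the leading $L\,r_+^L$ contributions from $\phi_+(L-1)\phi_-(L)$ and $\phi_+(L)\phi_-(L-1)$ cancel, and the surviving leading term is $-(r_+-1)(1-r_-)\,L\,r_+^{L-1}$, with corrections of size $O(r_+^{L-1})$. Since $(r_+-1)(1-r_-) = r_+ + r_- - 2 > 0$ using $r_+ r_- = 1$, this is a fixed positive constant, so the leading behavior dominates for $L$ large enough, uniformly in $N$ and $K$. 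This algebraic bookkeeping is the only nontrivial step; the rest reduces to the geometric and quadratic summations above.
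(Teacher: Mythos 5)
Your proof is correct, and the overall route is the one the paper takes (solve the homogeneous problems explicitly via the characteristic roots of $A_a$ and $C_c$, then bound coefficients and sum), but the crucial step --- uniform control of the coefficients in terms of $\theta^a$ --- is handled differently. The paper works with the four-dimensional basis $\{n/L,\,(L-n)/L,\,\lambda^n,\,\lambda^{L-n}\}$, deliberately normalized so that the $4\times4$ boundary-condition matrix $T_L$ converges to a fixed invertible matrix $T$ as $L\to\infty$; this gives $\|T_L^{-1}\|\le\|T^{-1}\|+\delta$ for large $L$ and avoids any determinant computation. You instead eliminate two coefficients with the conditions at $\Gamma_a^-$, reduce to a $2\times2$ system in $(a_3,a_4)$, and prove the quantitative lower bound $|D|\gtrsim L\,r_+^{L-1}$ by tracking the cancellation of the $O(L\,r_+^L)$ terms; I checked the expansion and the surviving leading term is indeed $-L(r_+-1)(1-r_-)\,r_+^{L-1}=-L(r_++r_--2)\,r_+^{L-1}$ with all other contributions $O(r_+^{L-1})$ or smaller, and $r_++r_->2$ follows from \eqref{nCondition} exactly as you say. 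Your route is more explicit and yields the sharper coefficient bounds $|a_3|\lesssim\|\theta^a\|\,r_+^{-(L-1)}$ and $|a_4|\lesssim\|\theta^a\|/L$ (showing the $O(\sqrt{L})$ growth comes entirely from the linear-in-$i$ mode), at the cost of the bookkeeping in $D$; the paper's normalization trick is softer and scales more gracefully if one does not want to expand determinants. For the continuum bound the two arguments are interchangeable: you sum the explicit linear solution, the paper invokes the discrete maximum principle, and both give exactly $(N-K)(\theta^c_K)^2$. Both your estimates $0\le\phi_-(i)\le i(1-r_-)$ and $0\le\phi_+(i)\le r_+^i$ are correct by convexity, and the implied constants depend only on $k_1,k_2$ through $r_\pm$, as required.
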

%%%%%%

\begin{rem}\label{rem:bounds}
The bounds (\ref{stabbound}) continue to hold when we take homogeneous Dirichlet boundary conditions on $\Gamma_a^+$ and $\Gamma_c^{-}$ in (\ref{prob1}) and inhomogeneous boundary conditions on $\Gamma_a^-$ and $\Gamma_c^+$ by replacing $\Gamma_a^+$ with $\Gamma_a^-$ and $\Gamma^-_c$ with $\Gamma^+_c$.
\end{rem}

Using the decomposition~\eqref{decomp}, the reduced space
problem (\ref{reduced}) assumes the form
\begin{equation}\label{unconstrained}
\begin{split}
\min\limits_{\left\{\theta^a, \, \theta^c\right\} \in \Lambda_a \times \Lambda_c} \frac{1}{2}\| v^a(\theta^a) - v^c(\theta^c)\| ^2_{\ell^2(\Omega_o)} &+ \left(v^a(\theta^a) - v^c(\theta^c),u^{a,0}-u^{c,0}\right)_{\ell^2(\Omega_o)}  \\
&\quad + \frac{1}{2}\| u^{a,0} - u^{c,0}\| ^2_{\ell^2(\Omega_o)}.
\end{split}
\end{equation}
The following result is key to proving that the reduced space problem  (\ref{reduced}), respectively (\ref{unconstrained}), has a unique minimizer.
\begin{thm}\label{norm1}
The form
\begin{equation}\label{eq:bform}
\left\langle \left\{\theta^a, \theta^c\right\}, \left\{\mu^a, \mu^c\right\}\right\rangle ~:= \left(v^a(\theta^a) - v^c(\theta^c), v^a(\mu^a) - v^c(\mu^c)\right)_{\ell^2(\Omega_o)}
\end{equation}
defines an inner product on $\Lambda_a \times \Lambda_c$.
\end{thm}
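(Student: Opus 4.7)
The plan is to verify the three defining properties of an inner product on $\Lambda_a\times\Lambda_c$: bilinearity, symmetry, and positive definiteness. Bilinearity follows immediately from the linearity of the solution maps $\theta^a\mapsto v^a(\theta^a)$ and $\theta^c\mapsto v^c(\theta^c)$, which holds because \eqref{prob1} consists of homogeneous linear equations with linear boundary data. Symmetry is evident from the defining formula \eqref{eq:bform}, and positive semi-definiteness is clear since $\langle\{\theta^a,\theta^c\},\{\theta^a,\theta^c\}\rangle=\|v^a(\theta^a)-v^c(\theta^c)\|_{\ell^2(\Omega_o)}^2\ge 0$. The substantive content of the theorem is positive definiteness.

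Suppose therefore that $v^a(\theta^a)=v^c(\theta^c)$ on all of $\Omega_o=[K,L]$. The continuum subproblem $-k_c\Delta_1 v^c=0$ on $\Omega_c^\circ$ with $v^c(K)=\theta^c$ and $v^c(N-1)=0$ has as its unique solution a discrete affine function $v^c_i=\alpha+\beta i$ determined by these two endpoint conditions. Hence $v^a$ must coincide with this same affine function $\alpha+\beta i$ on the overlap $[K,L]$.

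The crux of the argument is to propagate the affinity backward through the atomistic region. At the site $i=K+1$, which lies in $\Omega_a^\circ$ by Assumption~\ref{assumpB}, the atomistic equation reads
\[
A_a v^a(K+1)\;=\;-k_1\Delta_1 v^a(K+1)\;-\;k_2\Delta_2 v^a(K+1)\;=\;0.
\]
Since both $\Delta_1$ and $\Delta_2$ annihilate affine functions and $v^a_K,v^a_{K+1},v^a_{K+2},v^a_{K+3}$ are already affine by the previous paragraph, the only unknown in this equation is $v^a_{K-1}$, which using $k_2\ne 0$ is forced to equal $\alpha+\beta(K-1)$. Iterating the same reasoning at $i=K,K-1,\ldots,3,2$ in turn determines $v^a_{K-2},\ldots,v^a_1,v^a_0$ and shows that $v^a_j=\alpha+\beta j$ on the entire interval $[0,L]$. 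The homogeneous boundary data $v^a_0=v^a_1=0$ then force $\alpha=\beta=0$, which immediately gives $\theta^c=v^c(K)=0$ and $\theta^a=v^a|_{\Gamma_a^+}=0$, completing the proof.

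The main (essentially only) obstacle is this backward propagation. It rests fundamentally on $k_2\ne 0$; without the genuine next-nearest-neighbor interaction, $A_a$ would coincide with $C_c$ up to a positive factor and the equation at $i=K+1$ would reduce to an empty constraint on $v^a_{K-1}$, breaking the induction. Assumption~\ref{assumpB} (which yields $L-K\ge 4$) ensures four consecutive affine values are available to initiate the recurrence, and the standing convention $K\ge 1$ lets the induction proceed all the way down to $i=2$.
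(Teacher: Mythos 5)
Your proof is correct, but it takes a genuinely different route from the paper's. The paper proves Theorem~\ref{norm1} as a one-line corollary of the coercivity inequality (\ref{qEstimate22a}) in Lemma~\ref{Q}: that inequality bounds $\| v^a(\mu^a)\| ^2_{\ell^2(\Omega_a)}+\| v^c(\mu^c)\| ^2_{\ell^2(\Omega_c\backslash\Omega_o)}$ by a multiple of $\| v^a(\mu^a)-v^c(\mu^c)\| ^2_{\ell^2(\Omega_o)}$, so vanishing of the latter forces $v^a\equiv 0$ on $\Omega_a$ (hence $\mu^a=0$) and then $\mu^c=v^c_K=v^a_K=0$; Lemma~\ref{Q} itself is proved by an explicit decomposition of $v^a$ into linear and exponential modes and relies on Assumptions~\ref{assumpA} and~\ref{assumpB} together with a sufficiently large overlap. (A commented-out alternative in the source argues instead via a discrete maximum principle.) Your argument is a direct unique-continuation, backward-propagation proof: $v^c$ is discrete-affine, equality on $\Omega_o$ makes $v^a$ affine on $[K,L]$, and the five-point atomistic stencil applied at $i=K+1,K,\dots,2$ successively forces $v^a_{K-1},\dots,v^a_0$ to take the affine values, after which $v^a_0=v^a_1=0$ kills the affine function. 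This is more elementary and self-contained, needs only $L-K\ge 3$ (so that $K+1\le L-2$) rather than any quantitative overlap condition, and isolates exactly where positive definiteness comes from; what it does not provide is the quantitative coercivity constant of order $\gamma^{2}(L-K)/N$, which the paper needs anyway for the error analysis and which is why the paper routes the proof through Lemma~\ref{Q}. One small quibble: your closing claim that the induction would break for $k_2=0$ is not quite right --- in that case the nearest-neighbor term $\Delta_1$ itself propagates affinity backward one site at a time --- but this is tangential and does not affect your proof, which correctly uses $k_2\neq 0$ to solve for $v^a_{K-1}$ from the second-neighbor term.
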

\smallskip

\begin{proof} The proof follows from the inequality (\ref{qEstimate22a}) in Lemma~\ref{Q}.

\end{proof}

\smallskip

Theorem \ref{norm1} allows us to recast the reduced space problem (\ref{unconstrained}) as
\begin{equation}\label{woot}
\begin{split}
\min\limits_{\left\{\theta^a, \theta^c\right\} \in \Lambda_a \times
\Lambda_c}
\frac{1}{2} \| \{ \theta^a,\theta^c \}  \|^2_{\ell^\star(\Lambda_a\times\Lambda_c)}
%\|\{\theta^a, \theta^c\right\}\|^2}
%_{l^*(\Lambda_a\times\Lambda_c)}}
%\frac{1}{2}\langle\left\{\theta^a, \theta^c\right\},\left\{\theta^a, \theta^c\right\}\rangle
&+ \left(v^a(\theta^a) - v^c(\theta^c),u^{a,0}-u^{c,0}\right)_{\ell^2(\Omega_o)} \\
&\quad \quad + \frac{1}{2}\| u^{a,0} - u^{c,0}\| ^2_{\ell^2(\Omega_o)} ,
\end{split}
\end{equation}
where $\|\cdot\|_{\ell^\star(\Lambda_a\times\Lambda_c)}$ is the norm induced by (\ref{eq:bform}).
The necessary optimality condition (Euler-Lagrange equation) for (\ref{woot}) is the following variational equation: find
$\left\{\theta^a, \theta^c\right\}\in\Lambda_a\times\Lambda_c$ such that
\begin{equation}\label{varForm}
\left< \left\{\theta^a, \theta^c\right\}, \left\{\mu^a, \mu^c\right\}\right> ~=~-\left(u^{a,0}-u^{c,0},  v^a(\mu^a) - v^c(\mu^c)\right)_{\ell^2(\Omega_o)}
%F\left(\left\{\mu^a, \mu^c\right\}\right)
%\ \mbox{for all} \ \left\{\mu^a, \mu^c\right\} \in \left(\Lambda_a \times \Lambda_c\right).
\end{equation}
for all $\left\{\mu^a, \mu^c\right\} \in \left(\Lambda_a \times
\Lambda_c\right).$
Theorem~\ref{norm1} and the Riesz
representation theorem imply
that (\ref{varForm}) has a unique solution, thereby establishing the well-posedness of the reduced space problem
~\eqref{unconstrained}.

\begin{rem}\label{rem1}
There are two principal pathways for the analysis and the numerical solution of the constrained optimization problem
(\ref{opt1}). The first one, which we adopt in this paper, relies on the strictly convex reduced space problem (\ref{reduced}) and its equivalent forms (\ref{unconstrained}) and
(\ref{woot}). In this case the practical implementation of the AtC method involves the solution of the strongly coercive Euler-Lagrange equation (\ref{varForm}), followed by the recovery of the atomistic and continuum states from the virtual controls.

The second pathway relies on Lagrange multipliers to enforce the constraints in (\ref{opt1}) and yields a saddle-point optimization problem. The latter can be analyzed using the Brezzi's theory~\cite{brezzi_1974}, while implementation of the AtC method then requires the solution of a weakly-coercive, mixed-type optimality system. We note that showing the conditions of the Brezzi theory is essentially equivalent to showing the well-posedness of the reduced space problem.
\end{rem}

\begin{rem}\label{rem2}
Let $A_c$ be the restriction of $A$ to the continuum subdomain $\Omega_c$. Substituting $A_c$ for $C_c$ in
(\ref{opt1}) yields a constrained optimization formulation that is equivalent to the global atomistic problem
(\ref{eq:AP}). The corresponding reduced space problem and its Euler-Lagrange equation differ from (\ref{woot}) and
(\ref{varForm}) only by the use of the atomistic operator $A_c$ instead of $C_c$.
As a result, the variational problem (\ref{varForm}) can be thought of as resulting from a modification of the bilinear form associated to the original problem very similar to a nonconforming finite element method.
\end{rem}

%%%%%%%%%%%%%%%%%%%%%%%%%%%%%%%%%%%%%%%%%%%%%%%%%%%%%%%%%%%%%%%%%%%%%%
\section{Consistency and Error Analysis}\label{sec:error}
%%%%%%%%%%%%%%%%%%%%%%%%%%%%%%%%%%%%%%%%%%%%%%%%%%%%%%%%%%%%%%%%%%%%%%
This section analyzes the error between the true solution
$\tilde{u}^a$ to the atomistic problem~\eqref{testLabel0} and
the solution of the optimization-based AtC formulation
(\ref{opt1}).
%Here and in the remainder of the paper, we
%find it convenient to use modified Vinogradov notation where
%the implied constant is independent of the parameters $K,L,$
%and $N$.  Thus, $X \gtrsim~ Y$ means there is a constant $c$
%such that $X \geq c Y$ with $c$ independent of $K,L,$ and $N$.
%
%For a thorough asymptotic analysis, we also make the following
%standing assumption relating $K,L$, and $N$, which is
%reasonable under the heuristic that the atomistic model in an
%AtC coupling method is used only on a small portion of the
%domain.
%\begin{assumption}\label{assumpMain}
%There exists $0 < \gamma < 1$ such that
%\[
%K = \gamma L.
%\]
%Moreover, there exists $p > 1$ such that
%\[
%N^{1/p} \gtrsim L.
%\]
%\end{assumption}
%

To proceed with our analysis, let  $\{{\theta}^a_{op},{\theta}^c_{op}\}\in\Lambda_a\times\Lambda_c$ denote the
optimal solution of the reduced space problem (\ref{woot}) or,
what is the same---the solution of the Euler-Lagrange
equation (\ref{varForm}). The optimal solution of the full
problem (\ref{opt1}) is then given by
$\{{u}^a_{op},u^c_{op},\theta^a_{op},\theta^c_{op}\}$, where
\[
u^{a}_{op} = v^{a}(\theta^{a}_{op}) + u^{a,0}
\quad\mbox{and}\quad
u^{c}_{op} = v^{c}(\theta^{c}_{op}) + u^{c,0}.
\]
These are the optimal states. Using these states we define the AtC approximation to  $\tilde{u}^a$ as
\begin{equation}\label{eq:atcapprox}
u^{atc} :=
\begin{cases}
u^a_{op} & \mbox{in $\Omega_a$}, \\
u^c_{op} & \mbox{in $\Omega_c\backslash\Omega_o$}\,.
\end{cases}
\end{equation}
For the error analysis, it is convenient to express the
approximate AtC solution as
$$
u^{atc} = P\left(\{\theta^{a}_{op},\theta^{c}_{op}\}\right)\,,
$$
where the affine operator, $P: \Lambda_a \times \Lambda_c \to \mathcal{U}_0,$ is defined by
\begin{equation}\label{projectionDef}
P\left(\left\{\mu^a, \mu^c\right\}\right) :=  \begin{cases} & u^{a,0} + v^a(\mu^a) \ \mbox{in} \  \Omega_a, \\
                                                            & u^{c,0} + v^c(\mu^c) \ \mbox{in} \ \Omega_c\backslash\Omega_o,                                                               \end{cases}
\quad \forall \, \{\mu^a,\mu^c\}\in\Lambda_a\times\Lambda_c\,.
\end{equation}
Thus, the error of the AtC approximation (\ref{eq:atcapprox}) is
\begin{equation}\label{eq:atcerror}
\| \tilde{u}^a - u^{atc} \|_{\ell^2(\Omega)} =
\| \tilde{u}^a - P\left(\left\{\theta^a_{op}, \theta^c_{op}\right\}\right) \|_{\ell^2(\Omega)}\,.
\end{equation}
%
%where we have defined the coupled approximation, $u^{atc}$, by $u^{atc} := P\left(\left\{\theta^a_{op}, \theta^c_{op}\right\}\right)$.

To analyze (\ref{eq:atcerror}) it is advantageous to split $P$ into a
linear part, $Q$, and constant term, $U^0$, dependent only on the homogeneous data, i.e., $P=Q + U^0$ where
\begin{gather*}
Q\left(\left\{\mu^a, \mu^c\right\}\right) :=~  \begin{cases}  v^a(\mu^a) \ \mbox{in} \  \Omega_a, \\
                                                              v^c(\mu^c) \ \mbox{in} \
                                                              \Omega_c\backslash\Omega_o,
                                                               \end{cases}\quad\mbox{and}\quad
                                         U^{0} :=~ \begin{cases}  u^{a,0} \ \mbox{in} \  \Omega_a, \\
                                                                u^{c,0} \ \mbox{in} \ \Omega_c\backslash\Omega_o \,.
                                                               \end{cases}
\end{gather*}
We also introduce the trace operator $r: \mathcal{U}_0
\mapsto \Lambda_a \times \Lambda_c$ such that
\begin{equation}\label{representationDef}
r(u) := \left\{ \begin{pmatrix} u_{L-1} \\ u_{L} \end{pmatrix},\begin{pmatrix} u_{K}  \end{pmatrix} \right\} := \left\{ r^a(u), r^c(u)\right\}\quad\forall u \in \mathcal{U}_0\,.
\end{equation}
Because $r^a(\tilde{u}^a)$ contains the exact values of the atomistic solution, it follows that
\begin{equation}\label{ua}
\tilde{u}^a |_{\Omega_a}=v^a\left(r^a(\tilde{u}^a)\right) + u^{a,0} \,.
\end{equation}
We define the continuum lifting of the exact atomistic trace on $\Lambda_c$ as
\begin{equation}\label{uc}
u^c:=v^c\left(r^c(\tilde{u}^a)\right) + u^{c,0} \,.
\end{equation}
It is a matter of unraveling these definitions to see that
\begin{equation}\label{projectionrestrict}
P(r(\tilde{u}^a))=P\left(\left\{r^a(\tilde{u}^a), r^c(\tilde{u}^a)\right\}\right) =
\begin{cases}  \tilde{u}^a & \mbox{in} \  \Omega_a, \\
                      u^c & \mbox{in} \ \Omega_c\backslash\Omega_o                                                            \end{cases}\,.
\end{equation}
To estimate the AtC approximation error we split (\ref{eq:atcerror}) into two parts:
\begin{equation}\label{est0}
\begin{split}
\| \tilde{u}^a - P&\left(\left\{\theta^a_{op}, \theta^c_{op}\right\}\right)\| _{\ell^2(\Omega)}  \\
=~& \| \tilde{u}^a - P\left(r(\tilde{u}^a)\right) + P\left(r(\tilde{u}^a)\right) -
    P\left(\left\{\theta^a_{op}, \theta^c_{op}\right\}\right)\| _{\ell^2(\Omega)}  \\
  \leq~& \| \tilde{u}^a - P(r(\tilde{u}^a))\| _{\ell^2(\Omega)} + \| Qr(\tilde{u}^a) + U^0 -Q\left\{\theta^a_{op}, \theta^c_{op}\right\} - U^0\| _{\ell^2(\Omega)} \\
  =~& \| \tilde{u}^a - P(r(\tilde{u}^a))\| _{\ell^2(\Omega)} + \| Q\left(r(\tilde{u}^a) - \left\{\theta^a_{op}, \theta^c_{op}\right\}\right)\| _{\ell^2(\Omega)} \\
\leq~& \| \tilde{u}^a - P(r(\tilde{u}^a))\| _{\ell^2(\Omega)} +
   \| Q\| \cdot
   \| r(\tilde{u}^a)-\left\{\theta^a_{op}, \theta^c_{op}\right\}\| _{\ell^\star(\Lambda_a\times\Lambda_c)},
\end{split}
\end{equation}
where $\|\cdot\|_{\ell^\star(\Lambda_a\times\Lambda_c)}$ is the norm induced by (\ref{eq:bform}) and
\begin{equation}\label{eq:qNorm}
\| Q \| = \sup_{\{\mu^a,\mu^c\}\in\Lambda_a\times\Lambda_c}
\frac{\| Q\left(\left\{\mu^a,\mu^c\right\}\right) \|_{\ell^2(\Omega)}}
{\| \left\{\mu^a,\mu^c\right\} \| _{\ell^\star(\Lambda_a\times\Lambda_c)}}\,.
\end{equation}
The first term in (\ref{est0}) is the consistency error of the operator $P$.
Using (\ref{projectionrestrict})
\begin{equation}\label{est3}
\| \tilde{u}^a - P\left(r(\tilde{u}^a)\right) \|_{\ell^2(\Omega)} = \| \tilde{u}^a - u^c\| _{\ell^2(\Omega_c\backslash\Omega_o)}\,,
\end{equation}
i.e., the consistency error is confined to the purely continuum region.
The second term is proportional, up to a factor of $\|Q\|$,
to the approximation error\footnote{This error measures the difference between traces of the true atomistic solution $\tilde{u}^a$
and the approximate AtC solution (\ref{eq:atcapprox}).} in the solution of the
reduced space problem (\ref{unconstrained}). We proceed with an estimate of the approximation error, followed by a bound on the operator norm $\|Q\|$.
\begin{lemma}\label{trace}
Let $\tilde{u}^a$ solve~\eqref{testLabel0} and $\left\{\theta^a_{op},
\theta^c_{op}\right\}$ be the minimizer of~\eqref{unconstrained}.  Then
\begin{equation}\label{est13}
\| r(\tilde{u}^a) - \left(\left\{\theta^a_{op}, \theta^c_{op}\right\}\right)\| _{\ell^\star(\Lambda_a\times\Lambda_c)}
\leq
\| \tilde{u}^a -u^c \| _{\ell^2(\Omega_o)}\,.
\end{equation}
\end{lemma}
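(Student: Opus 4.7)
The plan is to exploit the fact that the reduced space problem \eqref{unconstrained} is a linear least-squares problem in disguise, so the optimality of $\{\theta^a_{op},\theta^c_{op}\}$ is equivalent to a Galerkin orthogonality relation in the $\ell^\star$ inner product defined in \eqref{eq:bform}. From this, the bound follows by a single Cauchy--Schwarz application (Pythagoras) after identifying what the ``residual'' is.

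The first step is to unfold the norm on the left-hand side using the inner-product definition of $\ell^\star$. By linearity of the maps $\theta^a\mapsto v^a(\theta^a)$ and $\theta^c\mapsto v^c(\theta^c)$,
\begin{equation*}
\| r(\tilde u^a) - \{\theta^a_{op},\theta^c_{op}\}\|^2_{\ell^\star}
= \bigl\|\bigl(v^a(r^a(\tilde u^a)) - v^a(\theta^a_{op})\bigr) - \bigl(v^c(r^c(\tilde u^a)) - v^c(\theta^c_{op})\bigr)\bigr\|^2_{\ell^2(\Omega_o)}.
\end{equation*}
The second step is to observe, via \eqref{ua} and \eqref{uc}, that adding and subtracting $u^{a,0}-u^{c,0}$ inside gives the identity $v^a(r^a(\tilde u^a))-v^c(r^c(\tilde u^a)) + (u^{a,0}-u^{c,0}) = \tilde u^a - u^c$ on $\Omega_o$, which is the key ``representation'' of the target error by the action of the $Q$-part of the map $P$.

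The third step is to invoke the Euler--Lagrange equation \eqref{varForm} with the test function $\{\mu^a,\mu^c\} = r(\tilde u^a) - \{\theta^a_{op},\theta^c_{op}\}$. Expanding $\|r(\tilde u^a)-\{\theta^a_{op},\theta^c_{op}\}\|^2_{\ell^\star}$ as $\langle r(\tilde u^a), r(\tilde u^a)-\{\theta^a_{op},\theta^c_{op}\}\rangle - \langle \{\theta^a_{op},\theta^c_{op}\}, r(\tilde u^a)-\{\theta^a_{op},\theta^c_{op}\}\rangle$ and substituting the variational identity into the second term collapses everything into a single $\ell^2(\Omega_o)$ inner product
\begin{equation*}
\bigl(\tilde u^a - u^c,\; v^a(r^a(\tilde u^a)-\theta^a_{op}) - v^c(r^c(\tilde u^a)-\theta^c_{op})\bigr)_{\ell^2(\Omega_o)}.
\end{equation*}
The fourth and final step is Cauchy--Schwarz: the right factor is exactly $\|r(\tilde u^a)-\{\theta^a_{op},\theta^c_{op}\}\|_{\ell^\star}$ by definition of the $\ell^\star$ norm, and dividing through yields the claimed inequality.

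There is no serious obstacle here; the proof is essentially the standard best-approximation inequality for a least-squares problem in a Hilbert space (once Theorem~\ref{norm1} has been established, so that $\ell^\star$ genuinely is an inner-product norm). The only point requiring a little care is the bookkeeping in step two: making sure that the affine shifts $u^{a,0}$ and $u^{c,0}$, which do not depend on the controls, reassemble correctly to produce the physical residual $\tilde u^a - u^c$ on $\Omega_o$ rather than some more opaque combination.
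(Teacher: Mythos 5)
Your proof is correct and follows essentially the same route as the paper: both use the Euler--Lagrange equation \eqref{varForm} to rewrite $\langle r(\tilde u^a)-\{\theta^a_{op},\theta^c_{op}\},\{\mu^a,\mu^c\}\rangle$, reassemble $\tilde u^a - u^c$ on $\Omega_o$ via \eqref{ua} and \eqref{uc}, and finish with Cauchy--Schwarz. The only cosmetic difference is that the paper bounds the dual (supremum) characterization of the $\ell^\star$ norm over all test functions, whereas you test directly with $\{\mu^a,\mu^c\}=r(\tilde u^a)-\{\theta^a_{op},\theta^c_{op}\}$ and divide; these are the same computation.
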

\begin{proof}
We bound the approximation error directly by noting $\left\{\theta^a_{op},
\theta^c_{op}\right\}$ solves the Euler-Lagrange
equation~\eqref{varForm} of the reduced space problem. As a
result,
\begin{equation}\label{est1a}
\begin{split}
\| &r(\tilde{u}^a) - \left(\left\{\theta^a_{op}, \theta^c_{op}\right\}\right)\| _{\ell^\star(\Lambda_a\times\Lambda_c)} \\
&\qquad = \sup\limits_{\left\{\mu^a, \mu^c\right\} \neq 0}\frac{\left|\langle r(\tilde{u}^a),
\left\{\mu^a, \mu^c\right\}\rangle
+\left(u^{a,0}-u^{c,0},  v^a(\mu^a) - v^c(\mu^c)\right)_{\ell^2(\Omega_o)}\right|}
{\| \left\{\mu^a, \mu^c\right\}\| _{\ell^\star(\Lambda_a\times\Lambda_c)}}.  \\
\end{split}
\end{equation}
Using Definition  (\ref{eq:bform}), \eqref{ua} and
\eqref{uc}, we can obtain
\begin{equation*}
\begin{split}
&\langle r(\tilde{u}^a),\left\{\mu^a, \mu^c\right\}\rangle  + \left(u^{a,0}-u^{c,0},  v^a(\mu^a) - v^c(\mu^c)\right)_{\ell^2(\Omega_o)}
\\[1ex]
&\qquad=
\left( v^a(r^a(\tilde{u}^a))-v^c(r^c(\tilde{u}^a)), v^a(\mu^a) - v^c(\mu^c)\right)_{\ell^2(\Omega_o)}
\\
&\qquad\qquad+
\left(u^{a,0}-u^{c,0},  v^a(\mu^a) - v^c(\mu^c)\right)_{\ell^2(\Omega_o)}
\\[1ex]
&\qquad= \left( v^a(r^a(\tilde{u}^a))+u^{a,0}-v^c(r^c(\tilde{u}^a))-u^{c,0}, v^a(\mu^a) - v^c(\mu^c)\right)_{\ell^2(\Omega_o)}
\\[1ex]
&\qquad= \left( \tilde{u}^a - u^c, v^a(\mu^a) - v^c(\mu^c)\right)_{\ell^2(\Omega_o)}
\le
\|\tilde{u}^a - u^c\|_{\ell^2(\Omega_o)}\cdot
\|\{\mu^a,\mu^c\}\|_{\ell^*(\Lambda_a\times\Lambda_c)}
 .
\end{split}
\end{equation*}
Using this identity in (\ref{est1a}) completes the proof.
\end{proof}
\smallskip
%\end{comment}

The following lemma estimates the norm of $Q$.
\begin{lemma}\label{Q}
%There exists a constant $c_1,$ independent of $K,\, L,$ and $N,$ such that
Under Assumption~\ref{assumpA} and Assumption~\ref{assumpB}, the norm of $Q$ is bounded by
\begin{equation}\label{eq:estimateQ}
\| Q\| \lesssim \gamma^{-1}\sqrt{\frac{N}{L-K}},
\end{equation}
where the implied constant is allowed to depend on $p$.
\end{lemma}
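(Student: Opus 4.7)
The plan is to bound $\|Q\|^2$ by combining the upper bound on the numerator of~\eqref{eq:qNorm} from Lemma~\ref{stab} with a matching inverse-type lower bound on the denominator. For the numerator, note that
\[
\|Q(\{\mu^a,\mu^c\})\|^2_{\ell^2(\Omega)} \le \|v^a(\mu^a)\|^2_{\ell^2(\Omega_a)} + \|v^c(\mu^c)\|^2_{\ell^2(\Omega_c)} \lesssim L\|\mu^a\|^2 + (N-K)\|\mu^c\|^2 \lesssim N(\|\mu^a\|^2 + \|\mu^c\|^2)
\]
is immediate from Lemma~\ref{stab}. The task then reduces to establishing the inverse trace estimate
\[
\|\mu^a\|^2_{\ell^2(\Gamma^+_a)} + \|\mu^c\|^2_{\ell^2(\Gamma^-_c)} \;\lesssim\; \frac{1}{\gamma^2(L-K)}\,\|v^a(\mu^a) - v^c(\mu^c)\|^2_{\ell^2(\Omega_o)}.
\]

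The proof of this estimate rests on the near-affine structure of both $v^a$ and $v^c$ on $\Omega_o$. The continuum solution is explicit: since $-k_c\Delta_1 v^c = 0$ on $\Omega_c^\circ$, $v^c_i = \mu^c(\bar N - i)/(\bar N - K)$ is the discrete harmonic extension, which by Assumption~\ref{assumpA} varies by only $O(\mu^c L/N)$ across $\Omega_o$ and is essentially a constant at level $\mu^c$. The atomistic side requires more work: the characteristic polynomial of the recurrence $A_a v = 0$ factors as $-(r-1)^2[k_1 r + k_2(r+1)^2]$, so its solutions are spanned by the affine modes $\{1,i\}$ and exponential modes $\{r_*^i, r_*^{-i}\}$, where the stability hypothesis $k_1 + 4k_2 > 0$ keeps the nontrivial root $r_* > 1$ bounded away from $1$ by a constant depending only on $k_1, k_2$. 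Consequently the exponential modes form $O(1)$-wide boundary layers near $\Gamma^-_a$ and $\Gamma^+_a$; under Assumption~\ref{assumpB}, $\Omega_o$ is separated from both by distances of order $K$ and $L-K = \gamma L$, so on $\Omega_o$ the exponential contributions decay to a negligible residue and $v^a$ is approximately affine there with slope proportional to $|\mu^a|/L$.

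Parameterizing $\Omega_o$ by $t = (i-K)/(L-K) \in [0,1]$, the difference $v^a - v^c$ thus has the approximate form of an affine function $a + b(t - 1/2)$, where the ``mean'' $a$ involves $\mu^c$ and a weighted combination of the components of $\mu^a$, while the ``slope'' $b$ is proportional to $\gamma|\mu^a|$. The discrete sum on $\Omega_o$ behaves like $(L-K)$ times the $L^2([0,1])$ norm of this affine function, which is a positive-definite quadratic form in $(a,b)$. Re-expressing $(a,b)$ as a linear function of $(\mu^a,\mu^c)$ yields a matrix with determinant of order $\gamma^2$ and bounded trace, hence minimum eigenvalue of order $\gamma^2$. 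This gives
\[
\|v^a - v^c\|^2_{\ell^2(\Omega_o)} \gtrsim \gamma^2 (L-K)\bigl(\|\mu^a\|^2 + \|\mu^c\|^2\bigr),
\]
which combined with the numerator bound yields $\|Q\|^2 \lesssim N/(\gamma^2(L-K))$, and taking square roots completes the proof. The main technical obstacle is making the ``approximately affine'' decomposition rigorous: one must quantify the exponential boundary-layer residuals in $v^a$ via the characteristic roots and control the $O(L/N)$ variation of $v^c$ on $\Omega_o$, absorbing both into the implicit constants uniformly in $K, L, N$ by means of Assumptions~\ref{assumpA}--\ref{assumpB}.
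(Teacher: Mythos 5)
Your overall architecture (bound the numerator of \eqref{eq:qNorm} from above, bound the denominator from below by a quantitative linear-independence argument on $\Omega_o$) matches the paper's, but the central step --- discarding the exponential modes on $\Omega_o$ and treating $v^a-v^c$ as approximately affine there --- is wrong, and the resulting inverse trace estimate is false. The artificial boundary $\Gamma^+_a=\{L-1,L\}$ is not separated from the overlap: it \emph{is} $\Gamma^+_o$, the right edge of $\Omega_o=[K,L]\cap\Z$. The decaying exponential mode $\lambda^{L-i}$ attains its maximum exactly at $i=L\in\Omega_o$, and its $\ell^2(\Omega_o)$ mass is of the same order as that of the affine modes (cf.\ \eqref{eq2:ap}, which is $\sim(L-K)\alpha_2^2$ with the paper's normalization). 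If you drop it, you are left with a two-parameter affine family $a+b(t-1/2)$ attempting to control the three-dimensional trace data $(\mu^a_{L-1},\mu^a_L,\mu^c_K)$; the map $(\mu^a,\mu^c)\mapsto(a,b)$ then has a nontrivial kernel, so the pulled-back quadratic form is degenerate and cannot bound $\|\mu^a\|^2+\|\mu^c\|^2$ from below. Concretely, your claimed estimate
\[
\|\mu^a\|^2_{\ell^2(\Gamma^+_a)} + \|\mu^c\|^2_{\ell^2(\Gamma^-_c)} \;\lesssim\; \frac{1}{\gamma^2(L-K)}\,\|v^a(\mu^a) - v^c(\mu^c)\|^2_{\ell^2(\Omega_o)}
\]
fails for the pure exponential mode: take $\mu^c=0$ and $\mu^a_i=\lambda^{L-i}\sqrt{L-K}$, so that $\alpha_1=\alpha_c=0$, $\alpha_2=1$ in the paper's notation. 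Then $\|\mu^a\|^2=(1+\lambda^2)(L-K)$ while $\|v^a-v^c\|^2_{\ell^2(\Omega_o)}\approx (L-K)/(1-\lambda^2)$, and your inequality would force $L-K\lesssim\gamma^{-2}$, contradicting $L-K=\gamma L\to\infty$.

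The fix, which is what the paper does, is to change the intermediate quantity from the raw traces $\|\mu^a\|^2+\|\mu^c\|^2$ to the normalized mode amplitudes $\alpha^2=\alpha_1^2+\alpha_2^2+\alpha_c^2$ (with the exponential mode scaled by $\sqrt{L-K}$ as in \eqref{eq:modes}). One then proves the lower bound $\|v^a-v^c\|^2_{\ell^2(\Omega_o)}\gtrsim\gamma^2(L-K)\alpha^2$ by keeping the exponential mode $v^2$ in play and showing its cross term with the affine part is only $O(\sqrt{L-K}\,\alpha^2)$, hence lower order than the diagonal contributions $O((L-K)\alpha^2)$ for a sufficiently wide overlap (this is \eqref{eq3:ap}--\eqref{eq5:ap}). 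Correspondingly, the numerator must be bounded by $N\alpha^2$ via explicit summation of each mode over $\Omega_a$ and $\Omega_c\backslash\Omega_o$, \emph{not} via Lemma~\ref{stab}, whose trace bound $L\|\mu^a\|^2\sim L(L-K)\alpha_2^2$ is too lossy on the exponential component. Your treatment of the left boundary layer (anchored at $\Gamma^-_a$, at distance $K$ from $\Omega_o$) as negligible is correct and matches the paper's handling of the corrections $v^3,v^4$; it is only the right boundary layer that cannot be discarded.
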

\begin{proof}
%%%
Definition (\ref{eq:qNorm}) implies that (\ref{eq:estimateQ}) will follow if we can show that
\begin{equation}\label{qEstimate22}
\| Q\left(\{\mu^a, \mu^c\} \right)\| _{\ell^2(\Omega)}
\lesssim~ \gamma^{-1}\sqrt{\frac{N}{L-K}}\,
\| \left\{\mu^a,\mu^c\right\} \| _{\ell^\star(\Lambda_a\times\Lambda_c)}
\quad\forall \, \{ \mu^a, \mu^c\}\in\Lambda_a \times \Lambda_c.
\end{equation}
On the other hand, the definition of $Q$ and (\ref{eq:bform}) imply that (\ref{qEstimate22}) is equivalent to
\begin{equation}\label{qEstimate22a}
\| v^a(\mu^a) \|^2_{\ell^2(\Omega_a)} +
\| v^c(\mu^c) \|^2_{\ell^2(\Omega_c/\Omega_o)}
\lesssim~ \gamma^{-2} \left(\frac{N}{L-K}\right)
\| v^a(\mu^a) - v^c(\mu^c)\| ^2_{\ell^2(\Omega_o)}
\end{equation}
for all $\{ \mu^a, \mu^c\}\in\Lambda_a \times \Lambda_c.$ To prove  (\ref{qEstimate22a}) we use the structure of $v^a(\mu^a)$ and $v^c(\mu^c)$.

Recall that  $v^{c}(\mu^c)\in\mathcal{U}_c$ solves the continuum submodel
$$
C_cv^c=0\quad \mbox{in $\Omega^\circ_o$}, \qquad v^c_K= \mu^c_K,\qquad v^c_{\bar{N}}=0\,.
$$
It is straightforward to verify that $v^{c}(\mu^c)$ is a linear function, i.e.,
\begin{equation}\label{contsol}
v^c_i=\alpha_c\frac{\bar{N}-i}{\bar{N}-K}\,,
\end{equation}
where $\alpha_c = \mu^c_K$.

%%%

On the other hand, $v^{a}(\mu^a)\in\mathcal{U}_a$ solves the atomistic submodel
$$
A_av^a=0\quad\mbox{in $\Omega^\circ_a$}
\qquad v^a_0=v^a_1=0,
\qquad v^a_{L-1}= \mu^a_{L-1},
\qquad  v^a_{L}= \mu^a_{L}\,.
$$
We decompose this field as  $v^a(\mu^a)=v^1 + v^2 + v^3 + v^4$
where
\begin{gather}\label{eq:modes}
v^1_i=\alpha_1 \frac iL\quad\mbox{and}\quad v^2_i=\alpha_2
\lambda^{L-i}\sqrt{L-K}
\end{gather}
are the dominant linear and exponential modes with $\alpha_1$ and $\alpha_2$ determined by the boundary conditions below.  Meanwhile, $v^3$ and $v^4$
are corrections to ensure that $v^a(\mu^a)=0$ on $\Gamma^{-}_a$, i.e.,
\begin{equation}
\left\{
\begin{array}{rll}
A_av^3  = &  0    & \mbox{in $\Omega^\circ_a$}  \\
      v^3  = & -v^2 & \mbox{on $\Gamma^{-}_a$} \\
      v^3  =&   0    & \mbox{on $\Gamma^{+}_a$}
\end{array}
\right.
\quad\mbox{and}\quad
\left\{
\begin{array}{rll}
A_av^4  = &  0    & \mbox{in $\Omega^\circ_a$}  \\
      v^4  = & -v^1 & \mbox{on $\Gamma^{-}_a$} \\
      v^4  = &   0    & \mbox{on $\Gamma^{+}_a$}
\end{array}
\right.
\,.
\end{equation}
To obtain  $v^2$ we need the roots of the characteristic polynomial of $A_a$
\[
p(\sigma) = -k_2\sigma^4 - k_1\sigma^3 + (2k_1 + 2k_2)\sigma^2 - k_1\sigma - k_2,
\]
which are given by  (see~\cite{dobsonluskin09})
\[
\lambda_1=\lambda_2=1;\quad \lambda_{3,4} = \frac{k_1 + 2k_2 \pm \sqrt{k_1^2 + 4k_1k_2}}{-2k_2}.
\]
We define $v^2$ by setting $\lambda=\lambda_4 = \frac{k_1 + 2k_2 -\sqrt{k_1^2 + 4k_1k_2}}{-2k_2}$ in (\ref{eq:modes}). Note that $0<\lambda<1$, as seen from the assumptions $k_1 > 0, k_2 < 0$, and $k_1 + 4k_2 > 0$.

The coefficients $\alpha_1$ and $\alpha_2$ are uniquely determined from the boundary condition $v^a(\mu^a)=\mu^a$ on $\Gamma^+_a$, which yields the following $2\times 2$ system:
\begin{equation}\label{eq:cov}
\begin{split}
\left(1-\frac{1}{L}\right)\alpha_1 + \lambda\sqrt{L-K} {\alpha_2} =~& \mu^a_{L-1} ,\\
{\alpha_1} + \sqrt{L-K}{\alpha_2}  =~&  \mu^a_{L}.
\end{split}
\end{equation}
Recall that {$\alpha_c = \mu^c_K$}. In the following, we define
\[
\alpha:=\sqrt{\alpha_1^2+\alpha_2^2+{\alpha_c^2}}.
\]
According to Remark \ref{rem:bounds}, the result of Lemma~\ref{stab} applies to $v^3$ and $v^4$, and so,
$$
\| v^3\| ^2_{\ell^2(\Omega_a)} \lesssim~ L  \| v^2 \|^2_{\ell^2(\Gamma^{-}_a)}
\quad\mbox{and}\quad
\| v^4\| ^2_{\ell^2(\Omega_a)} \lesssim~ L  \| v^1 \|^2_{\ell^2(\Gamma^{-}_a)} \,.
$$
Since $ v^1 \big |_{\Gamma^{-}_a}=\left(0,{\alpha_1}/L \right)$
and $ v^2 \big
|_{\Gamma^{-}_a}={\alpha_2}\sqrt{L-K}\left(\lambda^{L},\lambda^{L-1}
\right),$ we have the bounds
\begin{equation}\label{errorThrow}
\| v^3\| _{\ell^2(\Omega_0)}^2 \lesssim \lambda^{2(L-1)}L(L-K) \alpha_2^2	,
\quad\mbox{and}\quad
\| v^4\| _{\ell^2(\Omega_0)}^2 \lesssim  L\frac{\alpha_1^2}{L^2}.
\end{equation}
Using (\ref{errorThrow}) yields the following lower bound for the right hand side in (\ref{qEstimate22a}):
\begin{equation}\label{say}
\begin{split}
\| v^a(\mu^a) -  v^c(\mu^c) & \| _{\ell^2(\Omega_0)} \geq \| v^1 + v^2 - v^c\| _{\ell^2(\Omega_0)} - \| v^3\| _{\ell^2(\Omega_0)} - \| v^4\| _{\ell^2(\Omega_0)} \\[1ex]
\gtrsim ~& \| v^1 + v^2 - v^c\| _{\ell^2(\Omega_0)}
- \left(\lambda^{L-1}\sqrt{L(L-K)}
+ \frac{1}{\sqrt{L}}\right)\alpha.
\end{split}
\end{equation}

We proceed with estimating
\begin{equation}\label{eq0:ap}
\| v^1 + v^2 - v^c\| _{\ell^2(\Omega_0)}^2
= \| v^1 - v^c\|^2_{\ell^2(\Omega_0)} + 2\left( v^1 - v^c, v^2\right)_{\ell^2(\Omega_0)} + \| v^2\|^2_{\ell^2(\Omega_0)}.
\end{equation}
The term $\| v^1 - v^c\|^2_{\ell^2(\Omega_0)}$ is similar to the term in (\ref{eq:bform}) defining the trace norm $\|\cdot\|_{\ell^*(\Lambda_a\times\Lambda_c)}$, but it is simpler in that both $v^1$ and $v^c$ solve continuum
problems.  We will prove in Appendix~\ref{linAp} that
\begin{equation}\label{eq1:ap}
\| v^1 - v^c\|^2 _{\ell^2(\Omega_0)} \gtrsim (L-K) \gamma^2 \left(\alpha_c^2 + \alpha_1^2\right)
\end{equation}
for large $N.$
Furthermore, summing a finite geometric series shows
\begin{equation}\label{eq2:ap}
\| v^2 \|^2_{\ell^2(\Omega_0)} = \frac{L-K}{1-\lambda^2}\left(1-\lambda^{2(L-K+1)}\right)\alpha_2^2.
\end{equation}

Intuitively, one should suspect the cross term, $( v^1 -
v^c, v^2)_{\ell^2(\Omega_0)}$, in~\eqref{eq0:ap} to be estimable for large
overlap widths since the exponential term $v^2$ is not well
approximated by any linear function. %Since for large $N$ and
%$i\in\Omega_0$  we have $\frac{N-i}{N-K} \sim 1$,
We now calculate via explicit summation that
\begin{equation}\label{eq3:ap}
\begin{split}
\big( v^1 - v^c&, v^2\big)_{\ell^2(\Omega_0)}
=~  \sum_{i=K}^L \left({\alpha_1}\frac{i}{L}-\alpha_c \frac{\bar{N}-i}{ \bar{N} -K}\right){\alpha_2}\lambda^{L-i}\sqrt{L-K}
\\
=~&{\alpha_2}\sqrt{L-K}
\left(
{\alpha_1}\sum_{i=K}^L  \frac{i}{L}\lambda^{L-i} -
\alpha_c \sum_{i=K}^L \frac{ \bar{N} -i}{ \bar{N} -K}\lambda^{L-i}
\right)
\\
\geq~&
-|\alpha_1\alpha_2|\sqrt{L-K} \sum_{i=K}^L  \lambda^{L-i}
-|\alpha_c\alpha_2|\sqrt{L-K} \sum_{i=K}^L \lambda^{L-i}
\\
\gtrsim~&
-\sqrt{L-K}\left(\frac{1-\lambda^{L-K+1}}{1-\lambda}\right)\alpha^2 .
%%
%%
%%\big( v^1 - v^c&, v^2\big)_{\ell^2(\Omega_0)}
%%\\
%%=~& \sqrt{L-K}\alpha_2\alpha_3\left( \frac{i}{L}, \lambda^{L-i}\right)_{\ell^2(\Omega_0)} - \sqrt{L-K}\alpha_1\alpha_3\left( \frac{N-i}{N-K}, \lambda^{L-i} \right)_{\ell^2(\Omega_0)} \\
%%\geq~& -\frac{\sqrt{L-K}}{2}\left((\alpha_2^2 + \alpha_3^2)\left( \frac{i}{L}, \lambda^{L-i}\right)_{\ell^2(\Omega_0)} + (\alpha_1^2+\alpha_3^2)\left( 1, \lambda^{L-i}\right)_{\ell^2(\Omega_0)}\right) \\
%%\geq~& -\frac{\sqrt{L-K}}{2}\left(\alpha_1^2 + \alpha_2^2 + 2\alpha_3^2\right)\left( 1, \lambda^{L-i}\right)_{\ell^2(\Omega_0)} \\
%%\geq~& -\sqrt{L-K}\alpha^2\left(\frac{1-\lambda^{1+L-K}}{1-\lambda}\right).
\end{split}
\end{equation}
Using the inequalities~\eqref{eq1:ap},~\eqref{eq2:ap}, and~\eqref{eq3:ap} in~\eqref{eq0:ap} produces
\begin{equation}\label{eq4:ap}
\begin{split}
\| v^1 &+  v^2  - v^c\| _{\ell^2(\Omega_0)}^2
\gtrsim (L-K)\gamma^2(\alpha_c^2 + \alpha_1^2)
\\[0.75ex]
&
+(L-K)\left(\frac{1-\lambda^{2(L-K+1)}}{1-\lambda^2}\right) \alpha_2^2
-\sqrt{L-K}\left(\frac{1-\lambda^{L-K+1}}{1-\lambda}\right)\alpha^2
\\[0.75ex]
=~&
(L-K)\left[
\Big(\gamma^2-\frac{1}{\sqrt{L-K}}\cdot\frac{1-\lambda^{L-K+1}}{1-\lambda}
\Big)(\alpha_c^2 + \alpha_1^2)
\right.
\\[0.75ex]
&\qquad\qquad\qquad\quad
+\left.
\Big(
\frac{1-\lambda^{2(L-K+1)}}{1-\lambda^2}
-
\frac{1}{\sqrt{L-K}}\cdot\frac{1-\lambda^{L-K+1}}{1-\lambda}
\Big)
\alpha^2_2
\right].
\end{split}
\end{equation}
For a sufficiently large overlap region, there holds
$$
\sqrt{L-K} > \max
\left\{
\frac{1-\lambda^{L-K+1}}{\gamma^2(1-\lambda)},\frac{1+\lambda}{1+\lambda^{L-K+1}}
\right\},
$$
which guarantees the positivity of the terms multiplying $\alpha_c^2 + \alpha_1^2$ and $\alpha^2_2$ above.
Then since $\gamma^2 < 1$, we obtain from (\ref{eq4:ap}) that
\begin{equation}\label{eq5:ap}
\| v^1 + v^2 - v^c\| _{\ell^2(\Omega_0)}^2 \gtrsim (L-K)\gamma^2\alpha^2.
\end{equation}
Similarly, using (\ref{eq5:ap})  in~\eqref{say} yields
\begin{equation}\label{eq6:ap}
\| v^a - v^c\| _{\ell^2(\Omega_0)} \gtrsim~ \sqrt{L-K}\gamma\alpha.
\end{equation}
To complete the proof, we use the above results to estimate the
left-hand side
 in (\ref{qEstimate22a}):
\begin{align*}
&
\| v^a \|^2_{\ell^2(\Omega_a)} +  \| v^c\| _{\ell^2(\Omega_c\backslash\Omega_o)}^2
=
 \| v^1 + v^2 + v^3 + v^4\| ^2_{\ell^2(\Omega_a)} + \| v^c\| _{\ell^2(\Omega_c\backslash\Omega_o)}^2
\\[1ex]
&\quad\leq  4\left(
\| v^1\| _{\ell^2(\Omega_a)}^2 +
\| v^2\| _{\ell^2(\Omega_a)}^2 +
\| v^3\| _{\ell^2(\Omega_a)}^2 +
\| v^4\| ^2_{\ell^2(\Omega_a)} \right)+
\| v^c\| _{\ell^2(\Omega_c\backslash\Omega_o)}^2
\\
&\quad\leq 4\left(
L\alpha_1^2 +
(L-K)\frac{1-\lambda^{2(1+L-K)}}{1-\lambda^2} \alpha_2^2     +
L(L-K)\lambda^{2L-2} \alpha_2^2  +
\frac{\alpha_1^2}{L}\right) \\&\qquad+
( \bar{N} -L){\alpha_c^2} \\
&\quad\lesssim N(\alpha_c^2 + \alpha_1^2 + \alpha_2^2) \\
%\\
%&\qquad
&\quad\lesssim \frac{N \gamma^{-2}}{L-K}\| v^a - v^c\| _{\ell^2(\Omega_0)}^2.
\end{align*}
Estimates of the norms of $v^1$, $v^2$, $v^3$, $v^4$, and $v^c$ follow from
(\ref{stabbound}) in Lemma \ref{stab},  (\ref{eq2:ap}) and (\ref{errorThrow}), respectively. The final inequality, which establishes the assertion of the lemma, is a consequence of \eqref{eq6:ap}.
\end{proof}
\smallskip

%%%%%%%%%%%%LEMMA%%%%%%%%%%%%%%%%%%%%%%%%%%%%%%%
%\begin{lemma}\label{lem:qTerm}
%Let $r(\tilde{u}^a)$ and $\left\{\theta^a_{op}, \theta^c_{op}\right\}$ be as above.
%%Then for large enough $L$,
%Then there exists a constant $c_1,$ independent of $K,\,L,$ and
%$N,$ such that
%\begin{equation}\label{qEstimate}
%\| Q\left(r(\tilde{u}) - \left\{\theta^a_{op}, \theta^c_{op}\right\}\right)\| _{\ell^2} \leq c_1\sqrt{\frac{N}{L-K}}\| \tilde{u}^a - u^c\| _{\ell^2(\Omega_o)}.
%\end{equation}
%\end{lemma}
%\begin{proof}
%The proof follows directly from Lemma~\ref{trace} and
%Lemma~\ref{Q}.
%\end{proof}
%\smallskip
%%%%%%%%%%%%%%%%%%%%%%%%%%%%%%%%%%%%%%%%%%%%%%%%%%%%%%%%%%%

All results necessary for the completion of the AtC approximation error bound in
~\eqref{est0} are now in place.
\begin{prop}\label{res1}
Let $\tilde{u}^a$ solve~\eqref{testLabel0} and $\left\{\theta^a_{op},
\theta^c_{op}\right\}$ be the minimizer of~\eqref{unconstrained}.  The AtC solution satisfies the error bound
\begin{equation}\label{finale}
\| \tilde{u}^a - u^{atc}\| _{\ell^2(\Omega)}
\lesssim \left(1 + \gamma^{-1}\sqrt{\frac{N}{L-K}}\right)\| \tilde{u}^a - u^c\| _{\ell^2(\Omega_c)}.
\end{equation}
\end{prop}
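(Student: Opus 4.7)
The plan is to observe that Proposition \ref{res1} is essentially a synthesis step: the heavy lifting has already been done in the triangle-inequality decomposition (\ref{est0}), the consistency identity (\ref{est3}), the trace estimate of Lemma \ref{trace}, and the operator norm bound of Lemma \ref{Q}. My task is simply to chain these four ingredients together and then bound everything by the norm over the full continuum subdomain $\Omega_c$.

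First I would recall (\ref{est0}), which splits the AtC error into a consistency part $\|\tilde u^a - P(r(\tilde u^a))\|_{\ell^2(\Omega)}$ and an approximation part $\|Q\|\cdot\|r(\tilde u^a)-\{\theta^a_{op},\theta^c_{op}\}\|_{\ell^\star(\Lambda_a\times\Lambda_c)}$. Next I would invoke (\ref{est3}) to identify the consistency term with $\|\tilde u^a - u^c\|_{\ell^2(\Omega_c\setminus\Omega_o)}$, where $u^c$ is the continuum lifting defined in (\ref{uc}). Then I would apply Lemma \ref{trace} to replace the trace-norm factor by $\|\tilde u^a - u^c\|_{\ell^2(\Omega_o)}$, and apply Lemma \ref{Q} to replace $\|Q\|$ by a constant multiple of $\gamma^{-1}\sqrt{N/(L-K)}$.

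At this point, combining the pieces yields
\begin{equation*}
\| \tilde u^a - u^{atc}\|_{\ell^2(\Omega)}
\lesssim \| \tilde u^a - u^c\|_{\ell^2(\Omega_c\setminus\Omega_o)}
+ \gamma^{-1}\sqrt{\tfrac{N}{L-K}}\,\| \tilde u^a - u^c\|_{\ell^2(\Omega_o)}.
\end{equation*}
Since $\Omega_o \subset \Omega_c$ and $\Omega_c\setminus\Omega_o\subset\Omega_c$, both local norms on the right are bounded by $\|\tilde u^a - u^c\|_{\ell^2(\Omega_c)}$. Factoring gives the claimed estimate (\ref{finale}).

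There is no real obstacle here; the only subtlety is bookkeeping: making sure that the $u^c$ appearing in (\ref{est3}) and in Lemma \ref{trace} is the same continuum lifting of the atomistic trace defined in (\ref{uc}), so that the two contributions can be combined under a single $\|\tilde u^a - u^c\|_{\ell^2(\Omega_c)}$ on the right-hand side. The harder analytical work — proving that $\langle\cdot,\cdot\rangle$ is an inner product (Theorem \ref{norm1}) and quantifying $\|Q\|$ (Lemma \ref{Q}) — has already been carried out, so the proof of the proposition itself is short.
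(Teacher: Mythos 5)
Your proposal is correct and follows essentially the same route as the paper: both chain the decomposition \eqref{est0} with the consistency identity \eqref{est3}, the trace estimate \eqref{est13}, and the operator norm bound \eqref{eq:estimateQ}, then absorb the two local norms into $\|\tilde u^a - u^c\|_{\ell^2(\Omega_c)}$. The bookkeeping point you flag --- that $u^c$ in \eqref{est3} and in Lemma~\ref{trace} is the same lifting \eqref{uc} --- is indeed the only subtlety, and you handle it correctly.
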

\begin{proof}
Recall the split of the AtC solution error into a consistency error due to $P$ and the approximation error in the
reduced space problem (\ref{unconstrained}):
\begin{equation}
\begin{split}
\| \tilde{u}^a - &u^{atc}\| _{\ell^2(\Omega)} =
\| \tilde{u}^a - P\left(\left\{\theta^a_{op}, \theta^c_{op}\right\}\right)\| _{\ell^2(\Omega)}  \\[0.5ex]
\leq~& \| \tilde{u}^a - P(r(\tilde{u}^a))\| _{\ell^2(\Omega)} +
   \| Q\| \cdot
   \| r(\tilde{u}^a)-\left\{\theta^a_{op}, \theta^c_{op}\right\}\| _{\ell^\star(\Lambda_a\times\Lambda_c)},
\end{split}
\notag
\end{equation}
Using (\ref{est3}) for the consistency error, (\ref{est13}) for
the approximation error, and (\ref{eq:estimateQ}) for operator
norm yields the result of the proposition.
\end{proof}
\smallskip

Proposition~\ref{res1} reveals that the accuracy of the AtC
approximation is determined by two independent factors.
Replacing the atomistic model with a continuum model on
$\Omega_c$ introduces the \emph{continuum modeling error} $\|
\tilde{u}^a - u^c\| _{\ell^2(\Omega_c)}$, which is independent
of the choice of the coupling mechanism. An inherent assumption
in atomistic-to-continuum coupling is that the continuum model
closely approximates the atomistic model in the continuum
region, which can be expected when there are no
defects in the continuum region~\cite{acta.atc}. Thus, we expect $\| \tilde{u}^a - u^c\| _{\ell^2(\Omega_c)}$
to be small so long as this assumption holds.
On the other hand, the coupling mechanism via  the optimization framework
introduces the prefactor
$$
\sqrt{\frac{N}{L-K}}\approx \sqrt{\frac{|\Omega|}{|\Omega_o|}}\,,
$$
which depends on the size of the overlap region.
As can be expected, the AtC error is inversely proportional
to the size of $\Omega_o$.

We can precisely estimate the modeling error by applying the estimate
\eqref{CBounda} to the domain $\Omega_c.$ The operator $C$ is
now the $1D$ discrete Laplacian on $\Omega_c$ with homogeneous
Dirichlet boundary conditions at $K$ and $N-1.$ Thus, the
minimum eigenvalue for $C$ is $\lambda_1 =
4k_c\sin^2\left(\frac{\pi}{2(n+1)}\right)$ where now $n =
N-K-2$ is the dimension of $C$.  We then have since $\tilde{u}^a-u^c=0$
at atoms $K$ and $N-1$ that
\begin{equation}\label{CBound}
\begin{split}
\| \tilde{u}^a - u^c\| _{\ell^2(\Omega_c)}
\lesssim~& (N-K)^2\| \Delta_1^2\tilde{u}^a\| _{\ell^2(\Omega_c)}.
\end{split}
\end{equation}
The estimate (\ref{CBound}) confirms that the modeling error is small whenever
$\tilde{u}^a$ is smooth over the continuum region in the sense that
$\| \Delta_1^2\tilde{u}^a\| _{\ell^2\left(\Omega_c\right)}$ is small.

By using the modeling error bound~\eqref{CBound} in (\ref{finale}),  we obtain the following theorem for the AtC error estimate.
\begin{thm}\label{res2}
Under the conditions of Assumptions \ref{assumpA} and \ref{assumpB},  let $\tilde{u}^a$ solve~\eqref{testLabel0}, and let $\{\theta^a_{op},
\theta^c_{op}\}$ be the minimizer of~\eqref{unconstrained}.  Then
\begin{equation}\label{finale1}
\begin{split}
\| \tilde{u}^a - u^{atc}\| _{\ell^2(\Omega)}
\lesssim~& \left(1 + \gamma^{-1}\sqrt{\frac{N}{L-K}}\right)(N-K)^2\| \Delta_1^2\tilde{u}^a\| _{\ell^2\left(\Omega_c\right)}
,
%\\
%\leq~& c\frac{N^{\frac{5}{2}}}{\sqrt{L-K}}\| \Delta_1^2\tilde{u}^a\| _{\ell^2\left(\Omega_c\right)}.
\end{split}
\end{equation}
where $u^{atc}=P(\{\theta^a_{op}, \theta^c_{op}\})$ is the AtC solution and $P$ is defined by \eqref{projectionDef}.
\end{thm}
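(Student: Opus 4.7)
The plan is to obtain Theorem \ref{res2} by a direct composition of Proposition \ref{res1} with the continuum modeling error bound \eqref{CBound}, both of which are already in hand immediately above the theorem statement.

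First I would apply Proposition \ref{res1}, which under Assumptions \ref{assumpA} and \ref{assumpB} (the hypotheses of the theorem) gives
\[
\| \tilde{u}^a - u^{atc} \|_{\ell^2(\Omega)} \lesssim \left(1 + \gamma^{-1}\sqrt{\frac{N}{L-K}}\right) \| \tilde{u}^a - u^c \|_{\ell^2(\Omega_c)}.
\]
This reduces the AtC error to the continuum modeling error on $\Omega_c$ times the coupling prefactor from the optimization mechanism (controlled via Lemma \ref{Q}, which is where Assumptions \ref{assumpA} and \ref{assumpB} enter). Next I would invoke the modeling error estimate \eqref{CBound}, which was derived just before the theorem by repeating the Proposition \ref{cont.error} argument on the subdomain $\Omega_c$: the operator identity $A - C = -k_2 \Delta_1^2$ is domain-independent; $\tilde{u}^a - u^c$ vanishes at the boundary atoms $K$ and $N-1$ (so $C(\tilde{u}^a - u^c) = (C-A)\tilde{u}^a$ holds verbatim on $\Omega_c$); and the minimum eigenvalue of the $1$D discrete Laplacian on $\Omega_c$ with these homogeneous Dirichlet conditions is of order $(N-K)^{-2}$, yielding $\|C^{-1}\|_{\ell^2(\Omega_c)} \lesssim (N-K)^2$.

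Substituting \eqref{CBound} into the displayed bound above produces \eqref{finale1}. I expect no real obstacle here: both ingredients are already established and the proof is essentially a one-line chaining of inequalities. The only point worth double-checking is that the constants hidden in the modeling-error bound on $\Omega_c$ are independent of $K$, $L$, and $N$, which follows from the uniform asymptotic $4 k_c \sin^2(\pi/(2(n+1))) \sim (n+1)^{-2}$ with $n = N - K - 2$, matching the usage of the Vinogradov notation established in Section \ref{S3}.
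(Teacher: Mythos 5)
Your proposal is correct and follows exactly the paper's own route: the paper obtains Theorem~\ref{res2} by substituting the modeling error bound~\eqref{CBound} into the estimate~\eqref{finale} of Proposition~\ref{res1}, precisely the one-line chaining you describe. Your check that the hidden constant in~\eqref{CBound} is uniform in $K$, $L$, $N$ (via the eigenvalue asymptotics with $n=N-K-2$) matches the paper's derivation of that bound immediately preceding the theorem.
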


Note that the dependence of the AtC error on the size of the overlap domain is unchanged,
i.e., as the overlap width is increased, the
error decreases.  In the present situation, we did not
coarse-grain the continuum region, so the only increase in
complexity comes from increasing the size of the atomistic and
continuum regions.
%One could conceivably coarse-grain the
%continuum model without increasing the complexity by also
%allowing the overlap region to be coarse grained.
%However, the analysis provided here would not apply.

%
\subsection*{Thermodynamic limit}
By letting $N \to \infty$, the problem above is an example of a
thermodynamic limit.
A further estimate would typically be obtained
by assuming the fully atomistic solution decays sufficiently
rapidly as $N \to \infty$.  See \cite{acta.atc} for an analysis
in this setting for quasicontinuum methods.

We may conversely introduce an interatomic spacing parameter
$\epsilon$ with $\epsilon$ dependent norm
\[
\| u\| _{\epsilon} = \sqrt{\epsilon\sum u_i^2}.
\]
Setting $\epsilon = {N}^{-1}$ maintains
$\Omega=[0,1]$ and scaling the lattice by $j \mapsto \epsilon j$
scales
\[
\| \Delta_1^2u\| _{\ell^2\left(\Omega_c\right)} \mapsto \epsilon^4\| \Delta_1^2u\| _{\ell^2\left(\Omega_c\right), \epsilon}.
\]
The estimate in~\eqref{finale1} under this scaling is
\begin{equation}\label{scaleEst}
\| \tilde{u}^a - u^{atc}\| _{\ell^2(\Omega), \epsilon}
\lesssim~ \frac{\epsilon^{\frac{3}{2}}}{\sqrt{L-K}}\| \Delta_1^2\tilde{u}^a\| _{\ell^2\left(\Omega_c\right), \epsilon},
\end{equation}
which is the scaling limit as $\epsilon \to 0$. See~\cite{DobsonLuskinOrtner2010b}
for the derivation of a similar estimate in the case of the force based quasicontinuum operator in which $\epsilon$ is maintained as a parameter throughout.  Recalling Assumptions \ref{assumpA} and \ref{assumpB}, if the
 overlap region $\Omega_o$ has width $|\Omega_o|:=(L-K)\epsilon^{1/p}$ in the
 scaling limit, then we obtain the bound
\begin{equation}\label{scaleEst1}
\| \tilde{u}^a - u^{atc}\| _{\ell^2(\Omega), \epsilon}
\lesssim~  \frac{\epsilon^{\frac{3}{2} + \frac{1}{2p}}}{|\Omega_0|^{\frac{1}{2}}}\| \Delta_1^2\tilde{u}^a\| _{\ell^2\left(\Omega_c\right), \epsilon}.
\end{equation}
Hence, we may achieve any power of $\epsilon$ in the interval $\left(\frac{3}{2}, 2\right)$ for $p > 1$.
\section{Conclusion}\label{sec:concl}
This paper formulates and analyzes a new, optimization-based
strategy for atomistic-to-continuum coupling. Specifically, we
pose the problem of coupling a non-local, atomistic description
of a material with a local, continuous description as a
constrained optimization problem. The objective is to minimize
the $\ell^2$ difference between the continuum and atomistic
displacement fields over an overlap region, subject to
constraints expressing the atomistic and continuum force
balances in the respective subregions. The traces of the
atomistic and continuum solution components on the boundary of
the overlap region act as virtual boundary controls. Thus, our
approach can be viewed as an extension of the heterogeneous
decomposition method~\cite{gervasio_2001} to the AtC context.

%%%%%%%%%%%%%%%%%%%%%%%%%%%%%%%%%%%%%%%%%%%%%%%%%%%%%%%%%%%%%%%%%%%%%%
\section*{Acknowledgments}
The work of P. Bochev was supported by the Applied Mathematics
Program within the Department of Energy (DOE) Office of
Advanced Scientific Computing Research (ASCR). Part of this
research was carried under the auspices of the Collaboratory on
Mathematics for Mesoscopic Modeling of Materials (CM4), The
work of D. Olson was partially supported by Sandia's Computer
Science Research Institute Summer Internship Program.
%%%%%%%%%%%%%%%%%%%%%%%%%%%%%%%%%%%%%%%%%%%%%%%%%%%%%%%%%%%%%%%%%%%%%%
% Need lower case "appendix" to label with A, B, etc
\appendix

\section{Stability of atomistic and continuum problems}\label{A}
In this appendix, we prove the result stated in Lemma~\ref{stab}:
%there exists a constant $c$ independent of $K,\,L,$ and $N$ such that
\begin{equation}\label{stabboundAp}
\begin{split}
\| v^a(\theta^a)\| ^2_{\ell^2(\Omega_a)} \lesssim~& L   \|\theta^a\|^2_{\ell^2(\Gamma^+_a)} ,\\
%cL\left( (\theta^a_L)^2+(\theta^a_{L-1})^2 \right),\\
\| v^c(\theta^c)\| ^2_{\ell^2(\Omega_c)} \le~& (N-K)   \|\theta^c\|^2_{\ell^2(\Gamma^-_c)}.
%(\theta^c_K)^2.
\end{split}
\end{equation}
The second bound is a direct consequence of the maximum principle for the continuum operator $C = -k_c\Delta_1$.  Recalling that $v^c(\theta^c)$ is zero on $\Gamma_c^+$ and equal to $\theta^c_K$ on $\Gamma_c^- = \left\{K\right\}$, we have
\begin{equation}
\| v^c(\theta^c)\| ^2_{\ell^2(\Omega_c)} = \sum_{i = K}^{N-1} \left(v^c_i\right)^2 \leq \sum_{i = K}^{N-1} \left(\theta^c_K\right)^2 = (N-K)(\theta^c_K)^2.
\end{equation}

To prove the first bound in~\eqref{stabboundAp}, we note that the atomistic solution, $v^a(\theta^a)$, may be written as
\begin{equation}\label{atSolution}
\begin{split}
v^a(\theta^a)_n &= \beta_1\frac{n}{L} + \beta_2\frac{L-n}{L} + \beta_3\lambda^n + \beta_4\lambda^{L-n} \\
&=: \beta_1v^1(\theta^a) + \beta_2v^2(\theta^a) + \beta_3v^3(\theta^a) + \beta_4v^4(\theta^a),
\end{split}
\end{equation}
where $0 < \lambda < 1$ was defined in Lemma~\ref{Q} and the
coefficents are determined via the boundary conditions
$v^a(\theta^a) = 0$ on $\Gamma_a^-$ and $v^a(\theta^a) =
\theta^a$ on $\Gamma_a^+$.  Specifically,
\begin{equation}\label{coefficients}
T_L\begin{pmatrix}
\beta_1 \\
\beta_2 \\
\beta_3 \\
\beta_4
\end{pmatrix} :=
\begin{pmatrix}
0 &1 &1 &\lambda^L \\
\frac{1}{L} &\frac{L-1}{L} &\lambda &\lambda^{L-1} \\
\frac{L-1}{L} & \frac{1}{L} &\lambda^{L-1} & \lambda \\
1 &0 &\lambda^L & 1
\end{pmatrix}
\begin{pmatrix}
\beta_1 \\
\beta_2 \\
\beta_3 \\
\beta_4
\end{pmatrix} =
\begin{pmatrix}
0 \\
0 \\
\theta^a_{L-1} \\
\theta^a_L
\end{pmatrix},
\end{equation}
where
\begin{equation}
T_L \to \begin{pmatrix}
0 &1 &1 &0 \\
0 &1 &\lambda &0 \\
1 & 0 &0 & \lambda \\
1 &0 &0 & 1
\end{pmatrix} =: T \, \,  \mbox{as} \, \,  L \to \infty.
\end{equation}
For any $\delta > 0$, we can therefore choose $L$ such that $\| T_{L}^{-1}\|  < \| T^{-1}\|  + \delta$, and hence
\begin{equation}\label{coefEstimate}
\begin{split}
\left(\beta_1^2 + \beta_2^2 + \beta_3^2 + \beta_4^2\right) \leq \left(\| T^{-1}\|  + \delta\right)^2\left(\left(\theta^a_{L-1}\right)^2+ \left(\theta^a_L\right)^2 \right).
\end{split}
\end{equation}
Using successive Cauchy inequalities and explicit summation of finite geometric series yields
\begin{equation}\notag
\begin{split}
\| v^a  (\theta^a & )  \|^2_{\ell^2(\Omega_a)} \\
\leq~&
 4\left(\beta_1^2\| v^1(\theta^a)\| ^2_{\ell^2(\Omega_a)} \!+ \beta_2^2\| v^2(\theta^a)\| ^2_{\ell^2(\Omega_a)} \!+ \beta_3^2\| v^3(\theta^a)\| ^2_{\ell^2(\Omega_a)}
\!+ \beta_4^2\| v^4(\theta^a)\| ^2_{\ell^2(\Omega_a)}\right) \\
\lesssim~& 4\left(\beta_1^2L + \beta_2^2L + \beta_3^2\left(\frac{1 - \lambda^{L+1}}{1 - \lambda}\right) + \beta_4^2\left(\frac{{\lambda}^{-1}-\lambda^L}{\lambda^{-1}-1}\right)\right) \\
\lesssim~& L\left(\beta_1^2 + \beta_2^2 + \beta_3^2 + \beta_4^2\right)  \\
\leq~& \left(\| T^{-1}\|  + \delta\right)^2L\left(\left(\theta^a_{L-1}\right)^2+ \left(\theta^a_L\right)^2 \right),
\end{split}
\end{equation}
for large enough $L$ by~\eqref{coefEstimate}.

%%%%BEGINNING OF APPENDIX B FOR LINEAR ESTIMATE
\section{Estimate of $\| v^1 - v^c\|_{\ell^2(\Omega_0)}$}\label{linAp}
Finally, we establish the estimate \eqref{eq1:ap} under the conditions of Assumptions \ref{assumpA} and \ref{assumpB}.
Recall that $v^c$ and $v^1$ are defined in (\ref{contsol}) and (\ref{eq:modes}), respectively, and so,
\begin{equation}\label{eq:formz}
\|v^c - v^1\|^2_{\ell^2(\Omega_0)}
= \sum_{i=K}^{L}\left( \alpha_c  \frac{ \bar{N} -i}{ \bar{N} -K} - {\alpha_1}\frac{i}{L}\right)^2
= \tilde{A}\alpha^2_c -2\tilde{C}\alpha_c\alpha_1 + \tilde{B}\alpha^2_1
\end{equation}
where the coefficients of the quadratic form in (\ref{eq:formz}) are given by
$$
\tilde{A} = \sum_{i=K}^{L}\left( \frac{ \bar{N} -i}{ \bar{N} -K} \right)^2,\quad
\tilde{B} = \sum_{i=K}^{L}\left( \frac{i}{L}\right)^2,\quad\mbox{and}\quad
\tilde{C} = \sum_{i=K}^{L}\left( \frac{ \bar{N} -i}{ \bar{N} -K} \right)\cdot \left(\frac{i}{L}\right)\,,
$$
respectively. Summing the finite series for each coefficient
yields $\tilde{A} = \beta\cdot A$, $\tilde{B} = \beta\cdot B$,
and $\tilde{C}=\beta\cdot C$ where the common factor is
$\beta=(1+L-K)$ and
\begin{align*}
A=~&\frac{6 \bar{N}^2 + 2 L^2 + 2 K^2  - 6 K \bar{N} - 6 L \bar{N} + 2 K L +L-K}{6(K-\bar{N})^2},
\\
B=~&\frac{2L^2 + 2K^2 + 2K L  + L- K}{6L^2},
\\
C=~&\frac{2L^2 + 2K^2+ 2K L  -  3 K \bar{N} - 3 L \bar{N} + L - K}{6L(K-\bar{N})}\,,
\end{align*}
respectively.
Using that $K=(1-\gamma) L$ allows us to further write the coefficients as
$$
A=\frac{
	\bar{N}^2
	+L^2\big(1-\gamma+\frac13\gamma^2\big)
	+\frac16 \gamma L
	-L\bar{N}(2-\gamma)}{(\bar{N}-(1-\gamma) L)^2},
$$
$$
B=\frac{L(1-\gamma+\frac13\gamma^2\big)+\frac16 \gamma}{L},\quad\mbox{and}\quad
C=\frac{
	\bar{N}\big(1-\frac12 \gamma\big)
	-\frac16 \gamma
	-L(1-\gamma+\frac13 \gamma^3)}{\bar{N}-(1-\gamma) L}.
$$
Assumption \ref{assumpA} implies that $\lim_{N\rightarrow\infty} L/N =0$, and so
$$
A \to A_\infty = 1
, \quad
B \to B_\infty = {\textstyle 1-\gamma+\frac13\gamma^2}
, \quad\mbox{and} \quad
C \to C_\infty = {\textstyle 1-\frac12\gamma}\,.
$$
Let $0\le\lambda_1\leq\lambda_2$ be the eigenvalues of the
quadratic form ${A_\infty}\alpha^2_c -2{C_\infty}\alpha_c\alpha_1 +
{B_\infty}\alpha^2_1$. Using the expressions for the determinant and
the trace of the quadratic form, $\lambda_1 \lambda_2 = A_\infty B_\infty-C_\infty^2$
and $\lambda_1+\lambda_2 = A_\infty+B_\infty,$ we can estimate
$$
\lambda_1
= \frac{\lambda_1 \lambda_2}{\lambda_2}
\geq
\frac{\lambda_1 \lambda_2}{\lambda_1+\lambda_2}
=
\frac{\frac{1}{12} \gamma^2}{1 + (1-\gamma+\frac13\gamma^2)}
\geq \frac{1}{24} \gamma^2.
$$
This completes the proof.

%%%%%END OF LINEAR ESTIMATE

%%%%%%%%%%%%%%%%%%%%%%%%%%%%%%%%%%%%%%%%%%%%%%%%%%%%%%%%%%%%%%%%%%%%%%
\newpage
\bibliographystyle{plain}	% (uses file "plain.bst")
\bibliography{Manuscript27}		% expects file "myrefs.bib"
%%%%%%%%%%%%%%%%%%%%%%%%%%%%%%%%%%%%%%%%%%%%%%%%%%%%%%%%%%%%%%%%%%%%%%

\end{document}